\documentclass[11pt]{amsart}
\usepackage{amsfonts,amscd}
\usepackage{amssymb}
\usepackage{url}
\usepackage{graphicx}
\usepackage[english]{babel}
\theoremstyle{plain}
\newtheorem{theorem}                {Theorem}      [section]
\newtheorem{proposition}  [theorem]  {Proposition}

\newtheorem{lemma}        [theorem]  {Lemma}

\theoremstyle{definition}

\newtheorem{remark}       [theorem]  {Remark}

\numberwithin{equation}{section}

\def \R{{\mathbb R}}
\def \s{{\mathbb S}}

\def \H{{\mathbb H}}
\def \link {~}

\def \b{\mathcal B}
\usepackage{color}

\DeclareMathOperator{\trace}{trace}
\numberwithin{equation}{section}

\begin{document}

\title[Existence and stability of weak critical points]{Existence and stability of weak critical points of $r$-energy functionals}

\author{S.~Montaldo}
\address{Universit\`a degli Studi di Cagliari\\
Dipartimento di Matematica e Informatica\\
Via Ospedale 72\\
09124 Cagliari, Italia}
\email{montaldo@unica.it}

\author{A.~Ratto}
\address{Universit\`a degli Studi di Cagliari\\
Dipartimento di Matematica e Informatica\\
Via Ospedale 72\\
09124 Cagliari, Italia}
\email{rattoa@unica.it}

\author{A.~Sanna}
\address{Universit\`a degli Studi di Cagliari\\
Dipartimento di Matematica e Informatica\\
Via Ospedale 72\\
09124 Cagliari, Italia}
\email{antonio.sanna4@unica.it}

\begin{abstract}
The main aim of this paper is to prove the existence of certain proper weakly $r$-harmonic ($ES-r$-harmonic) maps. We construct critical points which belong to a family of rotationally symmetric maps $\varphi_a : B^n \to \s^n$, where $B^n$ and $\s^n$ denote the Euclidean $n$-dimensional unit ball and sphere respectively. We find that the existence of solutions within this family is restricted to specific dimensions $n$.  Next, we prove that our critical points are \textit{unstable}. In the course of this analysis we point out some specific differences between the $r$-harmonic and the $ES-r$-harmonic cases when $r \geq 4$. Next, we analyse two variants of the problem. First, we replace the target manifold $\s^n$ with a rotationally symmetric ellipsoid $E^n(b)$ and establish the existence of proper weakly biharmonic maps for all $n \geq 5$, as well as proper weakly triharmonic maps for all $n \geq 7$. Finally, we study a similar problem replacing the domain $B^n$ with a suitable warped product manifold.
\end{abstract}

\subjclass[2000]{Primary: 58E20; Secondary: 53C43.}

\keywords{Polyharmonic maps, weak solutions, stability, rotationally symmetric maps}

\thanks{The authors are members of the Italian National Group G.N.S.A.G.A. of INdAM. The work was partially supported by the Project {PRoBIKI} funded by Fondazione di Sardegna.  The author A.S. was supported by a NRRP scholarship - funded by the European Union - NextGenerationEU - Mission 4, Component 1, 
Investment 3.4.}

\maketitle

\section{Introduction}\label{intro}

{\it Harmonic maps} are the critical points of the {\em energy} functional
\begin{equation}\label{energia}
E(\varphi)=\frac{1}{2}\int_{M}\,|d\varphi|^2\,dV\, ,
\end{equation}
where $\varphi:M\to N$ is a smooth map between two Riemannian
manifolds $(M,g_M)$ and $(N,g_N)$ of dimension $m$ and $n$ respectively. The condition of harmonicity is equivalent to the fact that the map $\varphi$ is a solution of the Euler-Lagrange equation associated to the energy functional \eqref{energia}, i.e.,
\begin{equation}\label{harmonicityequation}
  - d^* d \varphi =   {\trace}_{g_{_M}} \, \nabla d \varphi =0 \, .
\end{equation}
The left member of \eqref{harmonicityequation} is a vector field along the map $\varphi$ or, equivalently, a section of the pull-back bundle $\varphi^{-1} \, (TN)$: it is called {\em tension field} and denoted $\tau (\varphi)$ (we refer to \cite{EL83, EL1} for background on harmonic maps). 
Now, let us denote $\nabla^M, \nabla^N$ and $\nabla^{\varphi}$ the induced connections on the bundles $TM, TN$ and $\varphi ^{-1}TN$ respectively. The \textit{rough Laplacian} on sections of $\varphi^{-1}  TN$, denoted $\overline{\Delta}$, is defined by
\begin{equation*}
    \overline{\Delta}=d^* d =-\sum_{i=1}^m\left(\nabla^{\varphi}_{e_i}
    \nabla^{\varphi}_{e_i}-\nabla^{\varphi}_
    {\nabla^M_{e_i}e_i}\right)\,,
\end{equation*}
where $\{e_i\}_{i=1}^m$ is a local orthonormal frame field tangent to $M$.
A topic of growing interest deals with the study of the so-called {\it polyharmonic maps}. In this context, it is important to point out that, despite some confusion in the literature, there is a significant difference between {\it $ES-r$-harmonic maps} and $r$-harmonic maps. This fact was studied in detail in \cite{BMOR1}, a rather extensive paper to which we refer for notation and background. 

As proposed in \cite{EL83} and \cite{ES}, {\it $ES-r$-harmonic maps} are the critical points of the $ES-r$-energy functional defined as follows:
\begin{equation}\label{bienergia}
    E^{ES}_r(\varphi)=\frac{1}{2}\int_{M}\,|(d^*+d)^r (\varphi)|^2\,dV\, .
\end{equation}
In the case that $r=2$, the functional \eqref{bienergia} is called bienergy and its critical points are the so-called \textit{biharmonic maps}. There have been extensive studies on biharmonic maps (see \cite{Jiang, SMCO} for an introduction to this topic). 

In the literature, another higher order version of the energy functional, denoted $E_r(\varphi)$, has been studied in several papers (for instance, see \cite{Maeta1, Maeta2, Maeta3, Maeta4, Mont-Ratto1, Mont-Ratto2, Na-Ura, Wang, Wang2}). If $r=2s$, $s \geq 1$, its definition is:
\begin{equation}\label{2s-energia}
\begin{split}
E_{2s}(\varphi)
= \frac{1}{2} \int_M \, \langle \,\overline{\Delta}^{s-1}\tau(\varphi), \,\overline{\Delta}^{s-1}\tau(\varphi)\,\rangle \, \,dV\,.
\end{split}
\end{equation}
In the case that $r=2s+1$, $s\geq 1$:
\begin{equation}\label{2s+1-energia}
\begin{split}
E_{2s+1}(\varphi)
= \frac{1}{2} \int_M \,\sum_{j=1}^m \langle\,\nabla^\varphi_{e_j}\, \overline{\Delta}^{s-1}\tau(\varphi), \,\nabla^\varphi_{e_j}\,\overline{\Delta}^{s-1}\tau(\varphi)\, \rangle \, \,dV \,.
\end{split}
\end{equation}
We say that a map $\varphi$ is \textit{$r$-harmonic} if it is a critical point of the functional $E_r(\varphi)$ defined in \eqref{2s-energia}, \eqref{2s+1-energia}. 
In 1989 Wang \cite{Wang} studied the first variational formula of the $r$-energy functionals $E_r(\varphi)$ and obtained a complete description of the corresponding Euler-Lagrange equations. Moreover, the expression for their second variation was derived in \cite{Maeta3}, where it was shown that a biharmonic map is not always $r$-harmonic ($r \geq 3$) and, more generally, that an $s$-harmonic map is not always $r$-harmonic ($s < r$). On the other hand, any harmonic map is trivially $r$-harmonic and $ES-r$-harmonic for all $r\geq 2$. Therefore, we say that an $r$-harmonic ($ES-r$-harmonic) map is {\it proper} if it is \textit{not} harmonic.
The functionals $E^{ES}_r(\varphi)$ and $E_r(\varphi)$ coincide in the following cases:
\begin{itemize}
\item [{\rm (i)}] $r=2,3$;
\item [{\rm (ii)}] $\dim M=1$;
\item [{\rm (iii)}] the Riemannian curvature tensor of $N$ vanishes.
\end{itemize}
Although for $r > 3$ the critical points of the $ES-r$-energy and of the $r$-energy do not always coincide, the following criterion was established in \cite[p. 370]{MOR22}:
\begin{proposition}\label{pro-general-criterium-ES-r}
Let $r\geq 4$ and  $\varphi:(M,g_M)\to (N,g_N)$ be a smooth map such that 
\[
d^2\overline{\Delta}^i\tau(\varphi) = 0, \quad i=0,\dots,r-4.
\]
Then,  $\varphi$ is $ES-r$-harmonic if and only if it is $r$-harmonic.
\end{proposition}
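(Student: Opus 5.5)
The strategy is to compare the two functionals directly and show that, under the hypothesis, they differ by terms whose first variation vanishes at $\varphi$. Then $\varphi$ is critical for one functional exactly when it is critical for the other. I work with the operator $D=d+d^*$ acting on $\varphi^{-1}TN$-valued forms, with $d$ the exterior differential induced by $\nabla^\varphi$. The basic facts are:
\begin{itemize}
\item $D\varphi=d\varphi$ and $D^2\varphi=d^*d\varphi=-\tau(\varphi)$, because $d(d\varphi)=0$ by torsion-freeness. So $D$ acts by $d^*$ on $1$-forms of the type $d\varphi$, and $d^2$ first appears on $\tau(\varphi)$.
\item On $0$-forms $D=d$, and on $1$-forms $D=d+d^*$. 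The obstruction to $D^2$ preserving degree is $d^2\sigma=R^N(d\varphi\cdot,d\varphi\cdot)\sigma$, a curvature $2$-form. This is exactly why $E_r^{ES}$ and $E_r$ differ only when $R^N\ne 0$ and $r\ge 4$.
\end{itemize}

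\textbf{Step 1 (degree bookkeeping).} Under the hypothesis $d^2\overline{\Delta}^i\tau(\varphi)=0$ for $0\le i\le r-4$, I prove by induction that $D^{2k}\varphi=(-1)^k\overline{\Delta}^{k-1}\tau(\varphi)$ and $D^{2k+1}\varphi=(-1)^k d\,\overline{\Delta}^{k-1}\tau(\varphi)$ for all $k$ with $2k+1\le r$. For a $0$-section $\sigma$ one has
\[
D^2\sigma=(d+d^*)d\sigma=d^2\sigma+d^*d\sigma=d^2\sigma+\overline{\Delta}\sigma .
\]
The hypothesis kills the $2$-form part $d^2\sigma$ at each stage up to the needed order. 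Consequently $|D^r\varphi|^2$ equals the integrand of $E_r$ at $\varphi$, so $E^{ES}_r(\varphi)=E_r(\varphi)$.

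\textbf{Step 2 (first variations).} Equality of the values at $\varphi$ is not enough; I need equality of the first variations. For a variation $\varphi_t$ with field $V$, I expand $D^r\varphi_t$ without the hypothesis. It is $D^r_{\rm deg}\varphi_t+\mathcal{R}_t$, where:
\begin{itemize}
\item $D^r_{\rm deg}\varphi_t$ is the degree-$0$ or degree-$1$ part, namely $\pm\overline{\Delta}^{s-1}\tau(\varphi_t)$ or $\pm d\,\overline{\Delta}^{s-1}\tau(\varphi_t)$;
\item $\mathcal{R}_t$ collects the forms of degree $\ge 2$. Each of them is built by applying powers of $D$ to some $d^2\overline{\Delta}^i\tau(\varphi_t)$ with $i\le r-4$.
\end{itemize}
These pieces are pointwise orthogonal by degree, so
\[
E^{ES}_r(\varphi_t)=E_r(\varphi_t)+\tfrac12\int_M|\mathcal{R}_t|^2\,dV .
\]
Since $\mathcal{R}_0=0$ by the hypothesis, $\frac{d}{dt}\big|_{t=0}\frac12\int|\mathcal{R}_t|^2=\int\langle\mathcal{R}_0,\dot{\mathcal{R}}_0\rangle=0$. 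Hence the two first variations coincide at $\varphi$, and $\varphi$ is $ES$-$r$-harmonic iff it is $r$-harmonic.

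\textbf{Main obstacle.} The hardest step is the orthogonal decomposition in Step 2. Higher-degree pieces can feed back into lower degrees through $d^*$: for instance $d^*d^2\sigma$ is a $1$-form. So $D^r_{\rm deg}\varphi_t$ is not literally $\overline{\Delta}^{s-1}\tau(\varphi_t)$; it differs from it by terms that are linear in some $d^2\overline{\Delta}^i\tau(\varphi_t)$ with $i\le r-4$. This is where the exact range $i\le r-4$ must come from.

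I would handle this by writing
\[
D^r\varphi_t=A_t+B_t ,
\]
where $A_t$ is the $r$-energy quantity and every term of $B_t$ contains a factor $d^2\overline{\Delta}^i\tau(\varphi_t)$ with $i\le r-4$. The bound $i\le r-4$ should follow by counting: a $d^2$ applied after $2i+2$ applications of $D$ needs at least two more applications of $D$ to be seen at order $r$. The hypothesis then gives $B_0=0$. Therefore
\[
\frac{d}{dt}\Big|_{t=0}\tfrac12\int_M|A_t+B_t|^2\,dV=\frac{d}{dt}\Big|_{t=0}\tfrac12\int_M|A_t|^2\,dV+\int_M\langle A_0,\dot B_0\rangle\,dV .
\]
The cross term $\int\langle A_0,\dot B_0\rangle$ is where care is needed. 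I would argue that it vanishes in one of two ways: either $\dot B_0$ has form-degree $\ge 2$ and is thus orthogonal to $A_0$, or $\dot B_0$ is a $d^*$ of such a form, in which case integration by parts moves the $d^*$ onto $A_0$ as a $d$ and produces a term proportional to $d^2\overline{\Delta}^i\tau(\varphi)=0$. Tracking these degrees carefully is where I expect the real work to lie.
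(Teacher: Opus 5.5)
\textbf{The cross-term step is incomplete.} The paper does not prove this criterion; it is quoted from \cite{MOR22}, so I judge your argument on its own. Your strategy is the natural one: show that $E^{ES}_r-E_r$ has zero first variation at $\varphi$. However, one displayed claim is false, and the step you yourself call "the real work" is not carried out.

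The identity $E^{ES}_r(\varphi_t)=E_r(\varphi_t)+\frac12\int_M|\mathcal R_t|^2\,dV$ in Step~2 fails for $r\ge5$. Already $D^5\varphi=-(d\overline{\Delta}\tau+d^*d^2\tau+d^3\tau)$ has a non-pure degree-one part. It produces the cross term $\int_M\langle d^2\overline{\Delta}\tau,d^2\tau\rangle\,dV$ of \eqref{ES-5-energia-generale}.

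You notice this, but your treatment of cross terms does not close.
\begin{itemize}
\item The dichotomy ``$\dot B_0$ has degree $\ge2$, or is $d^*$ of such a form'' is not exhaustive. For $r=6$, the degree-zero term $d^*d^*d^2\tau$ is $d^*$ of a $1$-form, and one integration by parts only gives $\langle d\overline{\Delta}^2\tau,d^*d^2\tau\rangle$. For $r=7$, the degree-one term $d\,d^*d^*d^2\tau$ ends with $d$.
\item Your counting ($2i+2+2\le r$) only yields $i\le\lfloor (r-4)/2\rfloor$. That bound is what gives $B_0=0$, but it is not the source of $r-4$, and it does not suffice. For $r=5$, the first variation of the cross term above is $\int_M\langle d^2\overline{\Delta}\tau,\nabla_{\partial_t}d^2\tau\rangle\,dV$. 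That term is only controlled by the hypothesis with $i=1=r-4$.
\end{itemize}

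\textbf{How to close the gap.} The missing ingredient is a first-passage decomposition combined with the formal self-adjointness of $D$. Since $d^*$ kills sections and $d^*d=\overline{\Delta}$ on sections, a word in $d,d^*$ applied to $\tau$ whose degree stays in $\{0,1\}$ is the pure alternating word. Every other word first reaches degree $2$ through some $d^2\overline{\Delta}^i\tau$. Summing over words,
\[
D^r\varphi=-D^{r-2}\tau=A+B,\qquad B=-\sum_{2i+4\le r}D^{\,r-4-2i}\,\omega_i,\qquad \omega_i=d^2\overline{\Delta}^i\tau ,
\]
where $A$ is $-\overline{\Delta}^{s-1}\tau$ or $-d\,\overline{\Delta}^{s-1}\tau$.

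The term $\frac12\int|B_t|^2$ has zero derivative because $B_0=0$. For the cross terms, set $m=r-4-2i$ and $\dot\omega_i=\nabla_{\partial_t}\omega_{i,t}|_{t=0}$, which is compactly supported. The commutators of $\nabla_{\partial_t}$ with $d$ and $d^*$ are zeroth-order curvature terms, so they vanish on $D^k\omega_{i,0}\equiv0$. Using also $\omega_{i,0}=0$,
\[
\frac{d}{dt}\Big|_{t=0}\int_M\langle A_t,D^m\omega_{i,t}\rangle\,dV=\int_M\langle A_0,D^m\dot\omega_i\rangle\,dV=\int_M\langle D^mA_0,\dot\omega_i\rangle\,dV .
\]
The degree-two part of $D^mA_0$ is a sum of words of length $m+r-2=2r-6-2i$ applied to $\tau$ and ending in degree $2$. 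Each such word factors through some $d^2\overline{\Delta}^j\tau$ with $2j+2\le 2r-6-2i$, i.e.\ $j\le r-4-i$. The hypothesis for $0\le j\le r-4$ therefore kills it, and this is exactly where the full range is used.

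A minor point: the signs in Step~1 are off, since $D^{2k}\varphi=-\overline{\Delta}^{k-1}\tau$; this is harmless for the norms.
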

We also point out that, by contrast with the case of $E_r(\varphi)$, the explicit derivation of the Euler-Lagrange equation for the Eells-Sampson functionals $E^{ES}_r(\varphi)$ seems, in general, a very complicated task. These difficulties are illustrated in \cite{BMOR1}, where the Euler-Lagrange equation of the functional $E^{ES}_4(\varphi)$ was computed, and differences and common features of $E^{ES}_r(\varphi)$ and $E_r(\varphi)$ were thoroughly discussed.

In this paper we first focus on a specific family of rotationally symmetric maps $\varphi_a : B^n \to \s^n$, where $B^n$ and $\s^n$ denote the Euclidean $n$-dimensional unit ball and sphere respectively. More precisely, we define:
\begin{equation}\label{symmetricmaps}\begin{array}{lcll}
                                           \varphi_{a} \,:\, &B^n &\to &  \s^n \subset \R^n \times \R \\
                                           &&& \\
                                           &x &\mapsto & ( \sin a\,{\frac{x}{\rho}}, \,\cos a) \, ,
                                         \end{array}
\end{equation}
where $\rho=|x|$ and $a$ is a constant in the real interval $(0,\pi / 2)$. Of course, $\varphi_{a}$ is well-defined and smooth away from the origin $O$. In Lemma\link\ref{lemma-belong-Sobolev-space} we prove that maps $\varphi_a$ as in \eqref{symmetricmaps} belong to the Sobolev space $W^{r,2}\left ( B^n,\s^n\right )$ if and only if $n \geq 2r+1$ and so, in these cases, they provide natural candidates for weak critical points of energy functionals of order $r$. 

We note that, if one sets $a= \pi /2$ in \eqref{symmetricmaps}, then he would obtain the well-known and widely studied \textit{equator map} (see \cite{FMR2, JK}). The equator map is \textit{weakly harmonic} if $n \geq3$, and so it is a stable, \textit{not proper}, weakly $r$-harmonic and $ES-r$-harmonic map whenever $4\leq 2r+1\leq n$. Thus, since we are interested in \textit{proper} solutions, we restrict our attention throughout the paper to the case $0< a < \pi/2$.

In the biharmonic case existence and stability of weak solutions of the type \eqref{symmetricmaps} was investigated in \cite{FMR}, where we proved the following results:
\begin{theorem}\label{theorem-m=5-6biharmonic} \cite{FMR} Let
$\varphi_a:B^n \to \s^n$ be a map as in \eqref{symmetricmaps}. Then $\varphi_a$ is a proper weakly biharmonic map if and only if either
\begin{itemize}
\item[(i)] $n=5$ and $a=a_2=\pi \slash 3$; or
\item[(ii)] $n=6$ and $a=(1\slash 2)\,\arccos (-4\slash 5)$.
\end{itemize}
\end{theorem}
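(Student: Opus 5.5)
Since the maps $\varphi_a$ are $O(n)$-equivariant, I would reduce the weak biharmonicity condition to a condition on a single real parameter and then compute it explicitly. First, note that by Lemma~\ref{lemma-belong-Sobolev-space} with $r=2$, $\varphi_a\in W^{2,2}(B^n,\s^n)$ exactly when $n\geq 5$. So only these dimensions need to be considered, and for them the bienergy $E_2$ is finite. Second, I would embed $\varphi_a$ into the one-parameter family of rotationally symmetric maps $\varphi_\alpha(x)=(\sin\alpha(\rho)\,x/\rho,\cos\alpha(\rho))$ and use the principle of symmetric criticality (Palais). With this, weak biharmonicity of $\varphi_a$ is equivalent to two things: the ODE
\[
\alpha''''+\dots=0
\]
holding classically on $(0,1]$, which is automatic for a constant $\alpha$ if the constant satisfies the ODE, together with the vanishing of the boundary term at $\rho=0$ that appears when we integrate by parts against compactly supported equivariant (and then general) test variations.

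Concretely, I would compute the tension field $\tau(\varphi_a)=-\frac{n-1}{\rho^2}\sin a\cos a\,\eta$, where $\eta$ is the unit normal direction $\partial_\alpha$. I would then compute the bitension field $\tau_2(\varphi_a)=-\overline{\Delta}\tau-\trace R^N(d\varphi,\tau)d\varphi$. This gives a multiple of $\eta$ of the form $\frac{\sin a\cos a}{\rho^4}\,P_n(\cos 2a)$, where $P_n$ comes from $\Delta(\rho^{-2})=(2\cdot(4-n))\rho^{-4}\cdot\frac{1}{?}$ plus the curvature term involving $(n-1)\sin^2 a$ and $\cos^2a-\sin^2 a$. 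Setting $\tau_2=0$ away from the origin gives a linear equation in $\cos 2a$, so there is at most one admissible $a\in(0,\pi/2)$ for each $n$. This is the smooth part of the condition.

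The main obstacle is the singular part. For a test section $V$ compactly supported in $B^n$, I would write
\[
\frac{d}{dt}E_2(\varphi_t)\Big|_{t=0}=\lim_{\epsilon\to0}\Big(\int_{B\setminus B_\epsilon}\langle\tau_2,V\rangle+\text{boundary integrals over }\partial B_\epsilon\Big).
\]
The boundary integrals are $\int_{\partial B_\epsilon}\langle\tau,\nabla_\nu V\rangle-\langle\nabla_\nu\tau,V\rangle$, of size $\epsilon^{n-1}(\epsilon^{-2}+\epsilon^{-3})$ for bounded $V$ with bounded derivative. These vanish for $n\geq5$ except when $n=4$ is approached. More carefully, the integrand $\langle\tau_2,V\rangle\sim\rho^{-4}$ is integrable only when $n\geq5$, and the boundary terms tend to zero for $n\geq5$. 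Using density of smooth maps in the appropriate sense, I would argue that weak biharmonicity is then equivalent to $\tau_2\equiv0$ off the origin.

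I would then solve $P_n(\cos2a)=0$ and check for which $n$ the root lies in $(-1,1)$ with $a\neq\pi/2$. I expect this to yield $\cos 2a=-1/2$ (so $a=\pi/3$) for $n=5$ and $\cos2a=-4/5$ for $n=6$. For $n\geq7$, the solution $\cos 2a$ should fall at or below $-1$, forcing $a$ outside $(0,\pi/2)$ (approaching the equator map), so there are no proper solutions. The delicate points are computing the coefficients of $P_n$ correctly and verifying this range condition for all $n\geq7$.
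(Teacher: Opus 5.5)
Your proposal is correct and follows essentially the route the paper sets up. First, $\varphi_a\in W^{2,2}$ iff $n\ge5$ (Lemma~\ref{lemma-belong-Sobolev-space}). Second, weak biharmonicity reduces to biharmonicity on $B^n\setminus\{O\}$ (Proposition~\ref{pro:weakly-criterium}), and your $O(\epsilon^{n-4})$ bound on the $\partial B_\epsilon$ boundary terms actually justifies this; the appeal to Palais is then unnecessary, and only $\nabla_{\partial_\rho}V$ enters there, which stays bounded because $\varphi_a$ is radially constant, even though angular derivatives of admissible $V$ grow like $\rho^{-1}$. Third, you obtain an explicit condition on the constant $a$.

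The computation you left open closes as follows:
\begin{itemize}
\item $(\rho^{-2})''+\frac{n-1}{\rho}(\rho^{-2})'=2(4-n)\rho^{-4}$, with no extra factor.
\item By Lemma~\ref{lemma-rough-laplacian-partialt}, $-\overline{\Delta}\tau$ also contains the term $-(n-1)\cos^2 a\,\rho^{-2}\tau$.
\item The curvature term is $|d\varphi_a|^2\tau=(n-1)\sin^2a\,\rho^{-2}\tau$.
\end{itemize}
Together these give $\tau_2(\varphi_a)=\frac{(n-1)\sin a\cos a}{\rho^4}\bigl[(n-1)\cos 2a+2(n-4)\bigr]\eta$, i.e.\ $\cos 2a=-2(n-4)/(n-1)$, which is \eqref{eq-biharm-from-models} with $f=\rho$. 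This value lies in $(-1,1)$ exactly for $n=5,6$; $n=7$ gives the equator map $a=\pi/2$, and $n\ge8$ gives no solution.
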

\begin{theorem}\label{stability-theorem} \cite{FMR} Let
$\varphi_a$ be one of the two proper weakly biharmonic maps of Theorem~\ref{theorem-m=5-6biharmonic}. Then $\varphi_a$ is unstable.
\end{theorem}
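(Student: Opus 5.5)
We need to show that the second variation of the bienergy at $\varphi_a$ is negative for some admissible variation. The plan is to use variations that stay inside the rotationally symmetric class. Instead of a constant angle, consider maps
\[
\varphi_\alpha(x)=\Bigl(\sin\alpha(\rho)\,\frac{x}{\rho},\,\cos\alpha(\rho)\Bigr),
\]
where $\alpha(\rho)=a+t\,v(\rho)$ and $v$ is smooth with compact support in $(0,1)$ or, more generally, in $[0,1)$ with suitable behaviour at $\rho=0$. For such maps the bienergy reduces to a one-dimensional integral. The tension field is
\[
\tau=\Bigl(\alpha''+\frac{n-1}{\rho}\alpha'-\frac{(n-1)\sin(2\alpha)}{2\rho^2}\Bigr)\,\partial_\alpha ,
\]
so that
\[
E_2(\alpha)=\frac{\omega_{n-1}}{2}\int_0^1\Bigl(\alpha''+\frac{n-1}{\rho}\alpha'-\frac{(n-1)\sin 2\alpha}{2\rho^2}\Bigr)^2\rho^{n-1}\,d\rho .
\]
Differentiating twice in $t$ at $t=0$ and using $\alpha'=0$ for the constant map, we obtain a quadratic form
\[
Q(v)=\int_0^1\Bigl[(L v)^2-\frac{(n-1)^2\sin(2a)\,\sin(2a)}{\rho^4}\,v^2\cdot\tfrac12\Bigr]\rho^{n-1}d\rho .
\]
Here $Lv=v''+\frac{n-1}{\rho}v'-\frac{(n-1)\cos 2a}{\rho^2}v$. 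The term linear in the second derivative of $\sin 2\alpha$ appears with coefficient $-\tau_0\cdot(n-1)\,2\sin(2a)\rho^{-2}$, where $\tau_0=-(n-1)\sin(2a)/(2\rho^2)$. I will recompute the precise constants carefully. For the two specific values in Theorem~\ref{theorem-m=5-6biharmonic}, we then have
\[
Q(v)=\int_0^1\Bigl[(Lv)^2+c\,\frac{v^2}{\rho^4}\Bigr]\rho^{n-1}d\rho ,
\]
with an explicit constant $c=c(n,a)$. One also needs to justify that these variations are admissible in the weak setting, i.e. that the maps lie in $W^{2,2}$ and the expansion is legitimate. Since $v$ can be taken supported away from $0$, the variation is smooth and compactly supported in the interior. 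The second variation is therefore just the second derivative of an ordinary integral in which the singular part is fixed.

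The key step is to exhibit $v$ with $Q(v)<0$. The operator $L$ is an Euler-type operator. I will test with functions $v=\rho^{\gamma}\eta(\rho)$, where $\eta$ is a cutoff, and use scaling: the weight makes the integrand homogeneous when $v=\rho^\gamma$. Then $Lv=p(\gamma)\rho^{\gamma-2}$ with
\[
p(\gamma)=\gamma(\gamma+n-2)-(n-1)\cos 2a .
\]
The integrand becomes $(p(\gamma)^2+c)\rho^{2\gamma+n-5}$. Choosing $\gamma$ complex, so that $v$ is a real part of $\rho^{\gamma}$, or choosing $\gamma$ near the critical exponent $2\gamma+n-4=0$ and using a logarithmic cutoff (the Hardy–Rellich optimal-constant argument), we reduce to checking
\[
\inf_{\mathrm{Re}\,\gamma=-(n-4)/2}|p(\gamma)|^2+c<0 .
\]
This is the sharp Rellich-type inequality for $L$. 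So instability holds exactly when $c$ is more negative than minus the Rellich constant of $L$. For $n=5$, $a=\pi/3$ and for $n=6$, $\cos 2a=-4/5$, this becomes an explicit numerical check. The computation of $c$, in particular its sign, requires care.

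The main obstacle is obtaining the correct constant $c$ in the second variation, together with the sharp infimum of $|p|^2$ on the critical line. Writing $\gamma=-(n-4)/2+is$ gives
\[
p=-\frac{n-4}{2}\cdot\frac{n}{2}-s^2-(n-1)\cos 2a+i s\,(\ldots),
\]
and the infimum must be minimized over $s$. If the pure power test fails, I would instead use a cutoff function supported in a small annulus near $\rho=1$ combined with an oscillating profile, or exploit non-radial (equivariant) variations $V$ of the form $f(\rho)$ times a first spherical harmonic. These add a further negative contribution from the curvature term $-\langle R(V,d\varphi)d\varphi,\tau\rangle$ that would make $Q$ negative more easily.
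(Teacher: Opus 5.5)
Your strategy is sound and differs from the paper's, but as written the decisive inequality is never checked, and the constant in your first display of $Q$ is off by a factor $2$. Here is the completion. Since $\tau_0=-(n-1)\sin(2a)/(2\rho^2)$ and $\partial_t^2\tau_\alpha|_{t=0}=2(n-1)\sin(2a)\,v^2/\rho^2$, the second variation is, up to the factor ${\rm Vol}(\mathbb{S}^{n-1})$,
\[
Q(v)=\int_0^1\Bigl[(Lv)^2-(n-1)^2\sin^2(2a)\,\frac{v^2}{\rho^4}\Bigr]\rho^{n-1}\,d\rho .
\]
So $c=-(n-1)^2\sin^2(2a)$: there is no factor $\tfrac12$, and the sign is never in doubt. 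Put $\sigma=-(n-4)/2$ and $A=\tfrac{n(n-4)}{4}+(n-1)\cos 2a$. Then $p(\sigma+is)=-(s^2+A)+2is$, so $|p|^2=(s^2+A)^2+4s^2$. For $n=5$, $a=\pi/3$ one gets $A=-3/4$; for $n=6$, $\cos 2a=-4/5$ one gets $A=-1$. In both cases the minimum is at $s=0$ and equals $A^2=9/16$, resp.\ $1$, far below $(n-1)^2\sin^2(2a)=12$, resp.\ $9$. So a real exponent suffices, and neither complex $\gamma$ nor your fallbacks are needed.

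To make the cutoff step precise, write $v(\rho)=\rho^{\sigma}w(\log\rho)$. Then exactly
\[
Q(v)=\int_{\mathbb{R}}\Bigl[(w''+2w'-Aw)^2-(n-1)^2\sin^2(2a)\,w^2\Bigr]\,dt .
\]
Choose $w(t)=\psi(t/R)$ with $0\neq\psi\in C^\infty_c(-2,-1)$. This gives $Q(v)=R\bigl(A^2-(n-1)^2\sin^2(2a)\bigr)\int\psi^2+O(1/R)<0$ for $R$ large. Here $v$ is supported in $(e^{-2R},e^{-R})$, so the varied maps coincide with $\varphi_a$ near $O$ and near $\partial B^n$. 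They therefore lie trivially in $W^{2,2}_{\varphi_a}(B^n,\mathbb{S}^n)$.

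The paper does not reprove this statement; it quotes \cite{FMR}. Its proof of the analogous Theorem~\ref{r-stability-theorem} shows the method it uses. It takes one explicit radial variation $\alpha_s=a+s(1-\rho)^k$, which also moves the map at the singular point, so admissibility rests on $n\ge 2r+1$. It inserts this into the reduced one-dimensional energy and evaluates the second variation to a negative number with computer algebra. That is short and concrete, but the profile must be guessed. In your normalization, for $n=6$ the choice $v=(1-\rho)^2$ gives $Q=7/15-3/10>0$, while $v=(1-\rho)^3$ gives $Q=-3/70$.

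Your argument instead exploits the $0$-homogeneity of $\varphi_a$, which gives $Q(v(\cdot/\lambda))=\lambda^{n-4}Q(v)$. A Mellin/Fourier reduction then turns instability into an inequality for the indicial polynomial that can be checked by hand. This buys three things:
\begin{itemize}
\item by Plancherel, the inequality is also necessary for negativity among radial variations supported in $(0,1)$;
\item destabilizing variations exist in every ball $B_\varepsilon$ around the singularity;
\item the same homogeneity holds for $E_r$ with $r\ge 3$, so the method extends in principle to those cases.
\end{itemize}
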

As for additional examples of weakly biharmonic maps from $B^n$ to $\mathbb S^n$, the interested reader is referred to \cite{Branding2025}. The above results of \cite{FMR} were obtained by computing with respect to standard Cartesian coordinates on $B^n$ and $\R^{n+1}$. One of the main goals of the present paper is to extend these results to higher orders $r \geq 3$. In this context the required computations are much longer and we found it useful to perform part of them in an intrinsic way, using the identification of $B^n$ and $\s^n$ with suitable models in the sense of \cite{GW}. 
\vspace{2mm}

As a preliminary result we prove:

\begin{proposition}\label{prop-equiv-r-harmonic-es-harmonic}
Assume $r\geq 2$ and $n\geq 2r+1$. Let $\varphi_a:B^n \to \s^n$ be a map as in \eqref{symmetricmaps}. Then $\varphi_a$ is weakly $r$-harmonic if and only if it is weakly $ES-r$-harmonic.
\end{proposition}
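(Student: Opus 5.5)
For $r=2,3$ the functionals $E_r$ and $E^{ES}_r$ coincide, so there is nothing to prove; I will therefore take $r\geq 4$. The plan is to adapt Proposition~\ref{pro-general-criterium-ES-r} to the weak setting. The map $\varphi_a$ is smooth on $B^n\setminus\{O\}$. I will check that the hypothesis $d^2\overline{\Delta}^i\tau(\varphi_a)=0$ holds there for all $i$, and then show that the singular point does not affect the comparison of the two first variations.

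\textbf{Structure of the iterated tension fields.} Identify $B^n\setminus\{O\}$ with the warped product $(0,1)\times_\rho \s^{n-1}$ and $\s^n$ with the model $[0,\pi]\times_{\sin}\s^{n-1}$. In these coordinates $\varphi_a(\rho,\theta)=(a,\theta)$, so $d\varphi_a$ kills $\partial_\rho$ and maps $T\s^{n-1}$ isometrically onto the parallel $\{a\}\times\s^{n-1}$, up to the factor $\sin a/\rho$. A direct computation gives
\[
\tau(\varphi_a)=-\frac{(n-1)\sin a\cos a}{\rho^2}\,\partial_\alpha ,
\]
where $\partial_\alpha$ is the unit meridian field. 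I will then show by induction that
\[
\overline{\Delta}^i\tau(\varphi_a)=\frac{c_i}{\rho^{2i+2}}\,\partial_\alpha
\]
for constants $c_i$ depending on $n$ and $a$. The induction step uses that $\nabla^{\varphi}_{\partial_\rho}\partial_\alpha=0$, that $\nabla^{\varphi}_{X}\partial_\alpha=\cot a\, d\varphi_a(X)$ for $X$ tangent to the sphere factor, and that the radial and tangential contributions combine back into a multiple of $\partial_\alpha$.

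\textbf{Vanishing of $d^2$.} For $\sigma=f(\rho)\partial_\alpha$, the $\varphi^{-1}TN$-valued $2$-form $d^2\sigma$ is the curvature term
\[
(X,Y)\mapsto R^N\bigl(d\varphi(X),d\varphi(Y)\bigr)\sigma .
\]
If $X$ or $Y$ is $\partial_\rho$, this vanishes because $d\varphi(\partial_\rho)=0$. If $X,Y$ are tangent to $\s^{n-1}$, then $d\varphi(X),d\varphi(Y)$ are orthogonal to $\partial_\alpha$, and on the round sphere
\[
R(U,V)W=\langle V,W\rangle U-\langle U,W\rangle V=0
\]
whenever $W\perp U,V$. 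Hence $d^2\overline{\Delta}^i\tau(\varphi_a)=0$ on $B^n\setminus\{O\}$ for every $i$.

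\textbf{Passing to weak critical points.} Weak criticality means the first variation vanishes for variations $\varphi_t$ generated by compactly supported smooth $V$ along $\varphi_a$. The argument of \cite{MOR22} behind Proposition~\ref{pro-general-criterium-ES-r} shows that, at a map with these vanishing properties, $\frac{d}{dt}E^{ES}_r(\varphi_t)|_{t=0}$ and $\frac{d}{dt}E_r(\varphi_t)|_{t=0}$ are the same integral expression in $V$ and the $\overline{\Delta}^i\tau(\varphi_a)$. The extra terms in the ES functional arise from expanding $(d+d^*)^r\varphi_t$: they are of the form $\langle d^2(\cdot),\,\text{variation of } d^2(\cdot)\rangle$ and vanish pointwise off $O$. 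I plan to re-derive this pointwise identity of integrands, without any integration by parts, so that it holds a.e. Then the two first variations agree as integrals over $B^n$, which converge absolutely by Lemma~\ref{lemma-belong-Sobolev-space} since $n\geq 2r+1$.

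The main obstacle is exactly this last point. The criterion in \cite{MOR22} is stated for smooth maps, and its proof may integrate by parts. I would deal with this by either excising a small ball $B_\varepsilon$, and showing the boundary terms are $O(\varepsilon^{n-1-(2r-1)})\to0$ using the homogeneity $\rho^{-k}$ of $k$-th derivatives when $n\geq 2r+1$, or by checking that the terms that differ are pointwise multiples of $d^2$-quantities and hence vanish identically on $B^n\setminus\{O\}$.
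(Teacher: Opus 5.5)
Your argument is essentially the paper's. The core of both is the vanishing of $d^2\overline{\Delta}^i\tau(\varphi_a)$ on $B^n\setminus\{O\}$. The paper reads this off O'Neill's warped-product curvature formula, where the only component that could survive carries a factor $\dot\alpha$. Your use of $d\varphi_a(\partial_\rho)=0$ and $R^{\s^n}(U,V)W=0$ for $W\perp U,V$ is the same computation. Both then conclude via Proposition~\ref{pro-general-criterium-ES-r}, Proposition~\ref{pro:weakly-criterium} and Lemma~\ref{lemma-belong-Sobolev-space}.

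The paper orders the last step more economically. It applies the criterion as a black box on the open set $B^n\setminus\{O\}$, where $\varphi_a$ is smooth and both Euler--Lagrange operators are differential, hence local. It then lets Proposition~\ref{pro:weakly-criterium} handle the origin separately for each functional. The nontrivial (``if'') direction of that proposition is exactly where your cut-off estimate belongs: boundary terms are of order $\varepsilon^{n-2r}$ and integrands are homogeneous of degree $-2r$. So you never need to reopen the first-variation argument behind the criterion.

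If you do reopen it, only your excision option works; the pointwise alternative fails for $r\geq 5$. Already $|(d^*+d)^5\varphi|^2=|d\overline{\Delta}\tau|^2+|d^*d^2\tau|^2+|d^3\tau|^2+2\langle d\overline{\Delta}\tau,d^*d^2\tau\rangle$. At $\varphi_a$ the first variation of the cross term reduces to $2\langle d\overline{\Delta}\tau,\nabla_t(d^*d^2\tau_t)\rangle$, with $d\overline{\Delta}\tau\neq 0$. For $\alpha_t=a+t\beta$ this expression contains a nonzero multiple of $\ddot\beta$, so it does not vanish pointwise. It becomes $\langle d^2\overline{\Delta}\tau,\nabla_t d^2\tau_t\rangle=0$ only after moving $d^*$ across, exactly as the paper does in the proof of Theorem~\ref{th-Euler-Lag-r=5}. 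Your description of the extra terms as $\langle d^2(\cdot),\mbox{variation of }d^2(\cdot)\rangle$ is therefore valid only up to an integration by parts. Its boundary terms at $\partial B_\varepsilon$ must be controlled as you indicate in your first option.
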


In the next result we determine explicitly the critical points of type $\varphi_a:B^n \to \s^n$ as in \eqref{symmetricmaps}. Due to the  high computational complexity we restrict our attention to the cases $3\leq r\leq 5$. 

\begin{theorem}\label{th-constant-soltz} 
Let
$\varphi_a:B^n \to \s^n$ be a map as in \eqref{symmetricmaps}. Then 
\begin{itemize}
\item[(i)] $\varphi_a$ is a proper weakly $ES-3$-harmonic ($3$-harmonic) map if and only if
$$
n=7 \quad {\rm and} \quad a=a_3=\dfrac{1}{2} \arccos\left(\dfrac{1}{9} \left(2\,\sqrt{10}-11\right)\right) \,;
$$
\item[(ii)] $\varphi_a$ is a proper weakly $ES-4$-harmonic ($4$-harmonic) map if and only if
$$
n=9 \quad {\rm and} \quad a=a_4=\dfrac{1}{2} \arccos\left(\dfrac{1}{16} \left(\sqrt{105}-19\right)\right) \,;
$$
\item[(iii)] $\varphi_a$ is a proper weakly $ES-5$-harmonic ($5$-harmonic) map if and only if
$$
n=11 \quad {\rm and} \quad a=a_{5}=\dfrac{1}{2} \arccos\left(\dfrac{1}{25} \left(6\sqrt{6}-29\right)\right) \,.
$$
\end{itemize}
\end{theorem}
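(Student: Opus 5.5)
The plan is to reduce the problem, via the rotational symmetry of $\varphi_a$ and Proposition~\ref{prop-equiv-r-harmonic-es-harmonic}, to a one-variable computation in the radial profile. Since $n \geq 2r+1$ is needed for $\varphi_a \in W^{r,2}(B^n,\s^n)$ (Lemma~\ref{lemma-belong-Sobolev-space}), and by Proposition~\ref{prop-equiv-r-harmonic-es-harmonic}, it suffices to study weak $r$-harmonicity for $r=3,4,5$ with $n\geq 2r+1$. I would model $B^n\setminus\{O\}$ as $(0,1)\times\s^{n-1}$ with metric $d\rho^2+\rho^2 g_{\s^{n-1}}$ and the target as $[0,\pi]\times \s^{n-1}$ with metric $d\alpha^2+\sin^2\alpha\, g_{\s^{n-1}}$. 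I would then embed $\varphi_a$ in the family of equivariant maps $\varphi_\alpha(\rho,\theta)=(\alpha(\rho),\theta)$. For such maps the energy $E_r$ reduces to $c_n\int_0^1 L_r(\alpha,\alpha',\dots,\alpha^{(r)},\rho)\,\rho^{n-1}d\rho$, computed intrinsically. The tension field is $\tau=\bigl(\alpha''+\tfrac{n-1}{\rho}\alpha'-\tfrac{(n-1)\sin 2\alpha}{2\rho^2}\bigr)\partial_\alpha$, and the iterated rough Laplacians of radial sections $f(\rho)\partial_\alpha$ stay of the same form, with the operator $f\mapsto -f''-\tfrac{n-1}{\rho}f'+\tfrac{(n-1)\cos^2\alpha}{\rho^2}f$ (for $\alpha$ constant).

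For $\alpha\equiv a$ each term is a monomial in $\rho$. Explicitly, $\overline{\Delta}^{s-1}\tau=C_s(a,n)\rho^{-2s}\partial_\alpha$, where $C_s$ is a product of factors $\bigl(-(2k)(2k+2-n)+(n-1)\cos^2 a\bigr)$ times $-\tfrac{(n-1)}{2}\sin 2a$. Next I would compute the Euler--Lagrange equation of the reduced one-dimensional functional, using Wang's formula restricted to equivariant variations or directly. Evaluated at $\alpha\equiv a$, it gives $P_r(a,n)\,\rho^{-2r}=0$ on $(0,1)$ for an explicit trigonometric polynomial $P_r$. To pass from the equivariant reduction to genuine weak criticality, I would use that $\varphi_a$ is $O(n)$-equivariant. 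Then the first variation in a direction $V\in W^{r,2}_0$ equals the first variation along its $O(n)$-average, by a principle of symmetric criticality argument. So only radial variations $v(\rho)\partial_\alpha$ matter, and it is enough to justify the integration by parts near $O$.

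The boundary terms at $\rho\to 0$ are of order $\rho^{n-1}\rho^{-2r+1}\cdot(\text{derivatives of } v)$. They vanish precisely because $n\geq 2r+1$ makes the integrands integrable and lets one cut off with $\eta_\epsilon(\rho)$, whose error is $O(\epsilon^{n-2r-1})$ times bounded quantities. In the borderline case $n=2r+1$ this error needs care, using capacity-type cutoffs (log cutoff) in the ball. Weak criticality is thus equivalent to $P_r(a,n)=0$.

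The final step, which I expect to be the main algebraic obstacle, is solving $P_r(a,n)=0$ for $0<a<\pi/2$ and $n\geq 2r+1$. Writing $t=\cos 2a\in(-1,1)$, $P_r$ factors as $\sin 2a$ times a polynomial $Q_r(t,n)$, since the quantity $\cos^2 a=(1+t)/2$ appears linearly in each factor. I would show that for $n$ large $Q_r(t,n)$ has no root in $(-1,1)$, by sign estimates on each factor: for $n>2r+1$ the factors $2k(n-2k-2)$ dominate $(n-1)\cos^2 a$ in a controlled way. I would then check the finitely many remaining $n$ by hand. This should leave only $n=2r+1$, where $Q_r$ becomes a quadratic in $t$ yielding $t=(2\sqrt{10}-11)/9$, $(\sqrt{105}-19)/16$, and $(6\sqrt6-29)/25$ respectively. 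Properness follows because $\varphi_a$ with $a<\pi/2$ is not weakly harmonic, since $\tau\neq0$. The hardest part will be obtaining the exact polynomial $P_r$ for $r=4,5$, since the odd-order energy involves $\nabla\overline{\Delta}^{s-1}\tau$ with a tangential $\s^{n-1}$ component. I would organize that part using the intrinsic model computations rather than Cartesian ones.
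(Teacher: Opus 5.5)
Your plan is essentially the paper's proof: Lemma~\ref{lemma-belong-Sobolev-space} forces $n\ge 2r+1$, Proposition~\ref{prop-equiv-r-harmonic-es-harmonic} removes the $ES$ distinction, and the reduced Lagrangian of Theorem~\ref{theorem-r-energy} at $\alpha\equiv a$ turns weak criticality into a polynomial equation in $x=\cos^2 a$ whose roots in $(0,1)$ are then sorted by dimension; your symmetric-criticality and cutoff argument just spells out what the paper takes as obvious in Proposition~\ref{pro:weakly-criterium}. Two corrections for when you carry it out: the cutoff error is $O(\epsilon^{n-2r})$, not $O(\epsilon^{n-2r-1})$, so $n=2r+1$ is not borderline and no log cutoff is needed; and for $r=4,5$ the polynomial is not quadratic (it carries the positive factor $(n-1)x+2n-8$ coming from $\mathcal{T}_4$, and for $r=5$, $n=11$ also $5x+9$), while the admissible root comes from a new factor produced by the Euler--Lagrange operator (for $r=3$ it is $3(n-1)^2x^2+2(n-1)(3n-19)x+2(n-4)(3n-23)$), not from the always-positive factors $2k(n-2k-2)+(n-1)\cos^2 a$ of $\overline{\Delta}^{s-1}\tau$, and this factor has all coefficients positive for $n\ge 2r+2$ and a negative constant term at $n=2r+1$, so Descartes' rule plus a check at $x=1$ finishes the job.
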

In Proposition~\ref{prop-equiv-r-harmonic-es-harmonic} we showed that any map $\varphi_a:B^n \to \s^n$  as in \eqref{symmetricmaps} is 
weakly $r$-harmonic if and only if it is weakly $ES-r$-harmonic. Despite this, if $r\geq 4$ the functionals  $E_r(\varphi)$ and $E^{ES}_r(\varphi)$ do not coincide. Therefore, in the study of the stability of these critical points we will have to take into account the differences between $E^{ES}_r(\varphi)$ and $E_r(\varphi)$ when $r=4,5$. But, as a matter of fact, our instability result is:
\begin{theorem}\label{r-stability-theorem} Let
$\varphi_a:B^n \to \s^n$ be any of the proper weak solutions given in Theorem~\ref{th-constant-soltz}. Then $\varphi_a$ is unstable for both $E_r(\varphi)$ and $E^{ES}_r(\varphi)$.
\end{theorem}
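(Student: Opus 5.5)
To prove instability it suffices to exhibit, for each of the three critical points $\varphi_{a_r}$ of Theorem~\ref{th-constant-soltz}, one admissible variation along which the second variation is negative. The natural choice, following the biharmonic case of Theorem~\ref{stability-theorem} in \cite{FMR}, is to stay inside a rotationally symmetric class. We consider maps $\varphi_{\alpha}(x)=(\sin\alpha(\rho)\,x/\rho,\cos\alpha(\rho))$ with $\alpha(\rho)=a_r+t\,\eta(\rho)$, where $\eta$ is a smooth function with compact support in $(0,1)$ (or vanishing to high order at $\rho=1$ with the appropriate boundary data). This corresponds to the section $V=\eta(\rho)\,\partial/\partial\alpha$ of $\varphi^{-1}T\s^n$.

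Using the model identification of $B^n$ and $\s^n$ from \cite{GW}, the energies $E_r(\varphi_\alpha)$ and $E_r^{ES}(\varphi_\alpha)$ reduce to one-dimensional integrals of the form $c_n\int_0^1 L_r(\rho,\alpha,\alpha',\dots,\alpha^{(r)})\,\rho^{n-1}d\rho$. We compute the second derivative at $t=0$, which is a quadratic form $Q_r(\eta)=\int_0^1 \sum_{j,k}c_{jk}\,\rho^{n-1-2r+j+k}\,\eta^{(j)}\eta^{(k)}\,d\rho$ with constant coefficients depending on $a_r$ and $n$. This homogeneity follows from the scaling invariance of the constant map $\alpha\equiv a_r$.

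The difference between $E_r$ and $E^{ES}_r$ for $r=4,5$ must be handled at second order. The strategy is to verify that the extra terms in $E^{ES}_r$, which involve $d^2\overline{\Delta}^i\tau$, are at least quadratic in the curvature terms that vanish along radial variations. Alternatively, we compute both quadratic forms explicitly and simply check the sign for each. Given Proposition~\ref{pro-general-criterium-ES-r} and the structure used in the proof of Proposition~\ref{prop-equiv-r-harmonic-es-harmonic}, we expect the ES correction either to vanish for radial variations or to contribute a separately computable term.

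To obtain a negative value we use the Euler-type substitution $\rho=e^{-s}$. This turns $Q_r$ into an integral over $s\in(0,\infty)$ of a constant-coefficient quadratic form in $\psi(s)=e^{-\beta s}\eta$ for a suitable weight $\beta=(n-2r)/2$. By Plancherel, $Q_r$ is then governed by a real polynomial symbol $P_r(\xi)$ of degree $2r$. Instability follows once we show that $P_r(\xi_0)<0$ for some $\xi_0$, typically $\xi_0=0$, since the zero-order coefficient is the most likely to be negative. Concretely, we take $\eta$ to be a cutoff of the function equal to $1$ on a long interval $[\rho_0^{L},\rho_0]$, in log scale, with fixed transition layers. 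Letting $L\to\infty$ makes $Q_r(\eta)$ dominated by $L\cdot P_r(0)$ times a positive constant, while the boundary layers contribute $O(1)$.

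The main obstacle is the explicit computation of the coefficients $c_{jk}$, and in particular $P_r(0)$, for $r=3,4,5$ at the specific angles $a_r$. These are long trigonometric expressions. First, we would use the criticality equation defining $a_r$ to simplify them. Then we would check numerically, and afterwards exactly in the algebraic numbers $\cos 2a_r$, that $P_r(0)<0$, or otherwise locate some $\xi_0$ with $P_r(\xi_0)<0$. This step is also the one where the $ES$ versus non-$ES$ distinction for $r=4,5$ must be checked carefully.
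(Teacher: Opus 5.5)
Your strategy is sound, but it is a genuinely different route from the paper's.

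\textbf{How the two routes differ.} The paper does no structural analysis of the Hessian. It picks explicit admissible radial variations $\alpha(\rho)=a_r+t(1-\rho)^k$, with $k=3,4$ for $r=3,4$ and $k=7$, resp.\ $k=8$, for $E_5$, resp.\ $E_5^{ES}$. These variations do not vanish at the origin. It then evaluates the second variation exactly from the one-dimensional Lagrangians, obtaining explicit negative algebraic numbers. You instead use the dilation invariance of $\varphi_{a_r}$: in $s=-\log\rho$ with $\psi=\rho^{(n-2r)/2}\eta$ the Hessian becomes a constant-coefficient form, so radial instability is equivalent to the symbol $P_r$ taking a negative value.

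\textbf{What your route buys.} Variations compactly supported in $B^n\setminus\{O\}$, even in arbitrarily small balls around $O$, already destabilize. Admissibility at the singular point is never an issue. The test at $\xi=0$ is a purely algebraic evaluation of the Hessian of the Lagrangian on $\eta=\rho^{-(n-2r)/2}$, with no integrals and no trial-and-error choice of exponents.

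The method cannot fail here. The paper's test functions correspond to admissible $\psi$ (vanishing to order $\ge r$ at $s=0$, exponentially decaying as $s\to\infty$), so their negative values force $P_r$ to be negative somewhere. In fact your guess $\xi_0=0$ works. For $r=3$, $n=7$, $x=\cos 2a_3$, in the normalization of the paper's numbers, one gets
$P_3(0)=(\tfrac{37}{4}+3x)(\tfrac94+6x)^2-36(1-x^2)(\tfrac94+6x)-9(1-x^2)(36+18x)\approx-146.5$.
The analogous evaluations for $r=4,5$ are negative as well.

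One slip in the cutoff construction: it is $\psi$, not $\eta$, that must equal $1$ on the long logarithmic interval. With $\eta\equiv 1$ you would not be testing the zero-frequency mode.

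\textbf{Correction: the $ES$ terms do contribute.} Your first suggestion for the $ES$ functionals is false. Proposition~\ref{pro-general-criterium-ES-r} only concerns the critical point itself. Along $\alpha=a+t\eta$, the formulas in the proof of Proposition~\ref{prop-equiv-r-harmonic-es-harmonic} give
$d^2\overline{\Delta}^k\tau(\partial_{w_i},\partial_\rho)=(-1)^{k+1}t\,\dot\eta\,\mathcal{T}_{2(k+1)}\frac{h''}{h}\partial_{w_i}$,
with $\mathcal{T}_{2(k+1)}\neq 0$ at $t=0$ because the map is proper. The extra terms are quadratic in these forms, so they do contribute to the second variation.

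For $r=4$ the contribution is $(n-1)\sin^2 a\int_0^1\tau_a^2\,\dot\eta^2\rho^{n-3}\,d\rho>0$, where $\tau_a=-(n-1)\sin a\cos a\,\rho^{-2}$. This adds a positive multiple of $\xi^2+\beta^2$ to the symbol, so it works against instability. It is exactly the gap between the paper's values $-118.393$ and $-113.988$.

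\textbf{What remains to be done.} You must take your second alternative and compute the $ES$ forms explicitly. For $r=5$ this requires reducing $E_5^{ES}$ on rotationally symmetric maps to a one-dimensional Lagrangian. No recursion provides this, and your proposal only asserts it. That reduction is precisely the paper's Theorem~\ref{th-Euler-Lag-r=5}; it includes $d^3\tau=0$ and a cross term $2\langle d^2\overline{\Delta}\tau,d^2\tau\rangle$ of indefinite sign. With it in hand, and with the sign evaluations actually carried out, your argument gives a complete and somewhat more informative alternative proof.
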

Next, we replace the target manifold $\s^n$ with a rotationally symmetric ellipsoid 
\[
E^n(b)=\left \{( x,y)\in \R^n \times \R \colon |x|^2+\frac{y^2}{b^2}=1\right \}\,.
\]
More precisely, let
\begin{equation}\label{symmetricmaps-ellipsoid}\begin{array}{lcll}
                                           \varphi_{a} \,:\, &B^n &\to &  E^n(b) \subset \R^n \times \R \\
                                           &&& \\
                                           &x &\mapsto & ( \sin a\,{\frac{x}{\rho}}, b\,\cos a) \, ,
                                         \end{array}
\end{equation}
where again $\rho=|x|$ and $0< a < \pi/2$. Our result in the biharmonic case is:
\begin{theorem}\label{th-ellipsoid-biharm}
Assume  $n\geq 5$.  Then there exists a proper weakly biharmonic map $\varphi_a:B^n \to E^n(b)$ of type \eqref{symmetricmaps-ellipsoid} if and only if 
\begin{equation}\label{eq-cond-bihar-ellipsoid}
0<b^2<\frac{(n-1)}{2(n-4)}\,.
\end{equation} 
\end{theorem}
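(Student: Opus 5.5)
The plan is to reduce the weak biharmonicity of $\varphi_a$ to an algebraic condition on $(a,b,n)$ and then study when that condition has a root $a\in(0,\pi/2)$. I will identify the domain $B^n\setminus\{O\}$ with the model $(0,1)\times\s^{n-1}$, with metric $d\rho^2+\rho^2 g_{\s^{n-1}}$. For the target I will use the parametrization $(\sin\alpha\,\theta,\,b\cos\alpha)$ of $E^n(b)$, which gives the rotationally symmetric metric
\[
G(\alpha)\,d\alpha^2+f(\alpha)^2 g_{\s^{n-1}},\qquad G(\alpha)=\cos^2\alpha+b^2\sin^2\alpha,\quad f(\alpha)=\sin\alpha .
\]
In these coordinates the maps of type \eqref{symmetricmaps-ellipsoid} are the equivariant maps $(\rho,\theta)\mapsto(\alpha(\rho),\theta)$ with $\alpha\equiv a$. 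As in Lemma~\ref{lemma-belong-Sobolev-space}, $\varphi_a\in W^{2,2}$ exactly when $n\geq5$, which is the standing assumption.

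\textbf{Step 1: reduced tension field and bienergy.} For an equivariant map I will compute the tension field. It points in the $\partial_\alpha$ direction with component
\[
\tau_\alpha=\alpha''+\frac{G'(\alpha)}{2G(\alpha)}\alpha'^2+\frac{n-1}{\rho}\alpha'-\frac{(n-1)f(\alpha)f'(\alpha)}{\rho^2G(\alpha)} .
\]
The bienergy then reduces to
\[
E_2(\alpha)=c_n\int_0^1\rho^{n-1}G(\alpha)\,\tau_\alpha^2\,d\rho .
\]

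\textbf{Step 2: reduction to radial variations.} Weak biharmonicity means $\frac{d}{dt}E_2(\varphi_t)|_{t=0}=0$ for all compactly supported variations in $B^n$, including variations supported near $O$. I will justify restricting to equivariant variations $\alpha=a+tv(\rho)$, with $v$ smooth and supported in $[0,1)$, in one of two ways:
\begin{itemize}
\item by a symmetric-criticality argument for the $O(n)$-action, checking that the non-equivariant part of the first variation vanishes by symmetry;
\item or, more robustly, by writing the first variation extrinsically as an integral of $\langle\tau,\cdot\rangle$ terms and noting that the tangential non-radial components integrate to zero over $\s^{n-1}$.
\end{itemize}
Because $n\geq5$, the integrands behave like $\rho^{n-5}$ near $O$, so they are integrable and the boundary terms at $\rho=0$ produced by integrating by parts vanish.

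\textbf{Step 3: the algebraic condition.} Substituting $\alpha=a+tv$ and differentiating, the first variation becomes
\[
C(a,b,n)\int_0^1\rho^{n-5}\,v\,d\rho
\]
plus terms in $v',v''$ with coefficients that are powers of $\rho$. After integration by parts these combine into a single multiple of $\int\rho^{n-5}v$. Hence weak biharmonicity is equivalent to $C(a,b,n)=0$. Properness is automatic, because $\tau_\alpha\neq0$ when $0<a<\pi/2$.

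\textbf{Step 4: solving $C=0$.} I expect $C$, after clearing the positive factor $G(a)^{-k}$, to be a quadratic polynomial in $x=\cos2a$ whose coefficients depend on $b^2$ and $n$. It remains to show that this quadratic has a root in $(-1,1)$ if and only if $b^2<(n-1)/(2(n-4))$. To do this I will examine the signs of the quadratic at $x=\pm1$ and use the intermediate value theorem, together with a discriminant/vertex argument to rule out roots otherwise.

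As a sanity check, for $b=1$ the condition reads $1<(n-1)/(2(n-4))$, which holds exactly for $n=5,6$, matching Theorem~\ref{theorem-m=5-6biharmonic}.

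\textbf{Main obstacle.} I expect the hardest part to be the explicit computation of $C(a,b,n)$ in Step 3, in particular keeping track of the $G'$ terms that appear once $b\neq1$. The "only if" direction of the sign analysis in Step 4 is the other delicate point. For the computation I would first try to linearize $\tau_\alpha$ and the bienergy operator around the constant solution, and only then integrate by parts.
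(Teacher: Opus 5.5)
Your plan is correct, and its skeleton is the paper's: reduce to the one-dimensional Lagrangian $\rho^{n-1}G(\alpha)\tau_\alpha^2$, extract the condition at $\alpha\equiv a$, and study a quadratic. Your $G$ is the paper's $k^2$, and your $\tau_\alpha$ agrees with \eqref{eq-tau-Enb}.

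The genuine difference is how you treat the origin. The paper invokes Proposition~\ref{pro:weakly-criterium}: it imposes the ODE from \eqref{Euler-lagrange-generale} only on $B^n\setminus\{O\}$, and regards passing across $O$ as automatic once $\varphi_a\in W^{2,2}$. You instead test with radial $v$ supported in $[0,1)$ that need not vanish at $\rho=0$. You then check that the boundary terms $\rho^{n-3}v'$ and $\rho^{n-4}v$ vanish at $0$ because $n\ge5$. This explicitly verifies that no contribution concentrated at $O$ appears, which the paper leaves implicit.

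The price is your symmetric-criticality step. To make it rigorous, let $\mathcal{J}$ be the linearised tension field and write $dE_2(\varphi_a)(V)=\int\langle\tau,\mathcal{J}V\rangle\,dV$. This integral converges absolutely for $n\ge5$, since $|\tau|$ and $|\mathcal{J}V|$ are both $O(\rho^{-2})$ for $V$ induced by smooth ambient test maps. Averaging over $O(n)$ via Fubini therefore replaces $V$ by $\bar v(\rho)\,\partial/\partial\alpha$, because the isotropy group $O(n-1)$ fixes no nonzero vector tangent to the $\s^{n-1}$-factor. Note that necessity of $C=0$ already follows from radial variations supported away from $O$, so symmetry is needed only for sufficiency.

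The computation you flag as the main obstacle is short. Set $G=G(a)$. The first variation is $C\int_0^1\rho^{n-5}v\,d\rho$, where $C$ is a nonzero multiple (the factor $(n-1)\sin a\cos a/G^2$) of
\[
4(n-4)G^2+2(n-1)\cos 2a\,G-2(n-1)(b^2-1)\sin^2a\cos^2a=2b^2\cos^4a\,P_2(\tan^2a),
\]
with $P_2$ as in \eqref{eq-biharmonicity-condition-ellipsoid-y}. In your variable $x=\cos 2a$ this equals $\frac{b^2}{2}Q(x)$, where
\[
Q(x)=A(1-x)^2+4(n-4)(1-x^2)+\frac{3(n-3)}{b^2}(1+x)^2,\qquad A=2(n-4)b^2-(n-1).
\]
Hence $Q(1)>0$ and $Q(-1)=4A$. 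If $A\ge0$, then $Q>0$ on $(-1,1)$ term by term, so no discriminant or vertex argument is needed. This is exactly the paper's observation that $P_2(y)$ has a positive root iff its leading coefficient is negative.
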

\begin{remark}
We point out that the dimensional restrictions which appear in Theorem\link\ref{theorem-m=5-6biharmonic} do not apply for a suitable choice of $b>0$. Indeed, for any given $n \geq 5$, there exists $b>0$ such that \eqref{eq-cond-bihar-ellipsoid} is satisfied, but $b=1$, i.e., the spherical case, is admissible only when $n=5,6$.
\end{remark}

As for the triharmonic case, we now prove:
\begin{theorem}\label{th-ellipsoid-triharm}
Assume  $n\geq 7$.  Then there exists a proper weakly triharmonic map $\varphi_a:B^n \to E^n(b)$ of type \eqref{symmetricmaps-ellipsoid} if and only if
\begin{equation}\label{eq-cond-trihar-ellipsoid}
0<b^2<\frac{(n-1)}{4(n-6)}\,.
\end{equation} 
\end{theorem}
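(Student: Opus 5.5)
The strategy is to reduce weak triharmonicity of $\varphi_a$ to a single algebraic equation in $t=\cos 2a$ (or $\sin^2 a$), with $n$ and $b$ as parameters, and then decide when it has a root in $(0,\pi/2)$. The map is $0$-homogeneous, so it has the form $\varphi_a(x)=\Phi(x/\rho)$ with $\Phi:\s^{n-1}\to E^n(b)$ a constant-latitude embedding.

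\textbf{Step 1: intrinsic computation.} Write $B^n\setminus\{O\}$ as the warped product $(0,1)\times_\rho \s^{n-1}$. The latitude of $E^n(b)$ at height $b\cos a$ is a round sphere of radius $\sin a$. I will compute its normal curvature $k(a)$ and the curvature of $E^n(b)$ there; both are explicit rational functions of $\sin a,\cos a,b$. The tension field is then
\[
\tau(\varphi_a)=\frac{(n-1)\,f(a,b)}{\rho^2}\,\nu ,
\]
where $\nu$ is the unit normal to the latitude inside $E^n(b)$ (pointing along the meridian). Next I apply $\overline{\Delta}$ and one covariant derivative, using the models approach of \cite{GW} and the same homogeneity scaling as in Theorem~\ref{th-ellipsoid-biharm}. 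Each derivative either keeps terms along $\nu$ or produces tangential terms $d\Phi(\cdot)$, and the coefficients are again powers of $\rho$ times functions of $a$.

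\textbf{Step 2: energy density and reduction.} Since $n\ge 7=2\cdot3+1$, Lemma~\ref{lemma-belong-Sobolev-space} gives $\varphi_a\in W^{3,2}$, so the functional $E_3$ is finite. By homogeneity,
\[
E_3(\varphi)=\frac12\int_M\sum_j|\nabla_{e_j}\tau|^2\,dV=C_n\int_0^1\rho^{n-7}\,d\rho\;G(a,b)=\frac{C_n}{n-6}\,G(a,b).
\]
I then argue as in the proof of Theorem~\ref{theorem-m=5-6biharmonic} (principle of symmetric criticality for $O(n)$-equivariant variations). Weak criticality reduces to criticality with respect to rotationally symmetric variations $\varphi_{a+\epsilon\psi(\rho)}$. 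Taking $\psi$ with compact support away from $O$ gives the ODE Euler--Lagrange equation. For a constant profile $a$, only the terms in $\partial_a G$ survive, because all $\rho$-derivatives of $\psi$ integrate to multiples of $\partial_a G$ times explicit $n$-dependent constants. One also needs a cut-off argument near $O$, valid because $n\ge 7$, to pass from variations supported away from $O$ to all of $C_c^\infty$. The result is a condition of the form
\[
\partial_a G(a,b)+\lambda_n\,G_1(a,b)=0 .
\]
After dividing by the factor corresponding to harmonicity ($f=0$, which is excluded for properness), this becomes an equation $P_n(t,b^2)=0$ that is polynomial or rational in $t$.

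\textbf{Step 3: root analysis (main obstacle).} I expect $P_n$ to be quadratic in $t$, as the surds in Theorem~\ref{th-constant-soltz}(i) suggest. The factor $n-6$ in the bound is consistent with the $\rho$-integral constant $n-6$ from Step 2. The hard part is to show that a root with $a\in(0,\pi/2)$ exists exactly when $b^2<(n-1)/(4(n-6))$. For this I will evaluate $P_n$ at the two endpoints. At $a\to\pi/2$ the map is the weakly harmonic equator map, so the reduced function should have a fixed sign there. At $a\to0$ the sign should be $\operatorname{sign}\big((n-1)-4(n-6)b^2\big)$. The intermediate value theorem then gives existence when the inequality holds. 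For the converse I will show monotonicity in $t$ (or that the discriminant forces the roots out of the interval) when the inequality fails.

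As a sanity check, setting $b=1$ must give $n-1>4(n-6)$, i.e.\ $n<23/3$, so only $n=7$, in agreement with Theorem~\ref{th-constant-soltz}(i). I will verify the formula for $P_n$ against that case and its value $a_3$.
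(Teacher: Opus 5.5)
Your Steps 1--2 follow the paper's route: write $E_3$ for profiles $\alpha(\rho)$ as a one-dimensional Lagrangian $L_3$, evaluate its Euler--Lagrange equation at $\alpha\equiv a$, and use $n\ge 7$ to pass from $B^n\setminus\{O\}$ to weak criticality. Your claim that the $\rho$-derivative terms ``integrate to multiples of $\partial_aG$'' is not right, since they produce genuinely different functions of $a$. The form $\partial_aG+\lambda_nG_1=0$ you end up with is still fine.

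The trouble is Step~3, where your predictions about the reduced equation are wrong in ways that matter. In $x=\cos^2a$ the condition is a \emph{cubic} $P_b(x)=A_3x^3+A_2x^2+A_1x+A_0$, with leading coefficient $A_3=(b^2-1)\bigl(2(n-4)b^2-3(n-3)\bigr)\bigl(4(n-6)b^2-5(n-5)\bigr)$. This vanishes only for $b^2=1$ and two other isolated values, so the quadratic in Theorem~\ref{th-constant-soltz}(i) is an accident of $b=1$.

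Your endpoint signs are also reversed. At $a=0$ ($x=1$) one has $P_b(1)=-15(n-3)(n-5)<0$ for \emph{every} $b$. The $b$-dependence sits at the equator end $a=\pi/2$ ($x=0$), where $P_b(0)=-2b^4(n-4)\bigl(4(n-6)b^2-(n-1)\bigr)$. Your heuristic that harmonicity of the equator map forces a fixed sign there is a non sequitur. Once the vanishing harmonic factor is divided out, the value at $a=\pi/2$ is a derivative of the Euler--Lagrange expression at the equator map, and it depends on the geometry of $E^n(b)$ near its equator. For existence this only swaps the roles of the endpoints in your intermediate value argument, so that half survives once $P_b$ is actually computed.

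The real gap is the converse. You must show that $P_b$ has no zero in $(0,1)$ when $b^2\ge b_*^2:=(n-1)/(4(n-6))$, and neither tool you propose works. A discriminant argument is not available for a cubic whose shape changes with $b$. Monotonicity is false: for $n=7$, $b^2=5/2$ one finds $P_b(x)=-90x^2+120x-150$, which has an interior maximum at $x=2/3$. The same holds in $t=\cos 2a=2x-1$.

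The paper's mechanism is a comparison with the threshold value $b_*$. First, $P_{b_*}(x)<0$ on $(0,1)$ follows from the explicit form $P_{b_*}(x)=\frac{x}{4(n-6)}[\cdots]$ with a negative bracket. Then one uses
\[
P_{b_*}(x)-P_b(x)=\frac{(1-x)\bigl(4(n-6)b^2-(n-1)\bigr)}{4(n-6)}\,Q_b(x),
\]
where $Q_b(x)$ is increasing in $b^2$ for $x\in(0,1)$, $n\ge 7$. Hence $Q_b\ge Q_{b_*}>0$, and so $P_b\le P_{b_*}<0$ on $(0,1)$. Without this step, or an equivalent sign argument, your proposal establishes at most the ``if'' direction.
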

\begin{remark}
Again, comparing with Theorem~\ref{th-constant-soltz}, we observe that for any given $n \geq 7$ there exists $b>0$ such that \eqref{eq-cond-trihar-ellipsoid} is satisfied, but $b=1$, i.e., the spherical case, is admissible only when $n=7$. 
\end{remark}
Our work is organized as follows: in Section~\ref{preliminaries} we recall some basic facts about Sobolev spaces, weak solutions and stability. In Section~\ref{proofs} we prove Proposition~\ref{prop-equiv-r-harmonic-es-harmonic} and Theorem~\ref{th-constant-soltz}. In Section~\ref{stability} we study the stability of these critical points and prove Theorem~\ref{r-stability-theorem}. In Section~\ref{sec-ellipsoid} we provide the proofs of our results concerning the ellipsoidal target, i.e., Theorems~\ref{th-ellipsoid-biharm} and \ref{th-ellipsoid-triharm}. Finally, in Section~\ref{sec-domain-not-Bn} we study a variant of the problem replacing the domain $B^n$ with a suitable warped product manifold $\b^n_f$. We prove that the existence of biharmonic or triharmonic critical points in our family essentially forces $\b^n_f=B^n$.

\section{Preliminaries}\label{preliminaries}
First, we introduce the most convenient setting to study maps of the type \eqref{symmetricmaps}. Let $(M,g_M)$ be an $m$-dimensional compact Riemannian manifold with boundary $\partial M$ and $\varphi : M \to \s^n$. We consider the canonical embedding $i: \s^n  \hookrightarrow \R^{n+1}$ and write $\varphi=\left (\varphi_1,\ldots,\varphi_{n+1}\right )$ for $i \circ \varphi$. We use the following notation:
\begin{equation}\label{gradiente-Laplaciano}
\nabla \varphi= \left (\nabla \varphi_1,\ldots, \nabla \varphi_{n+1} \right )\quad {\rm and}
\quad \Delta \varphi= \left (\Delta \varphi_1,\ldots, \Delta \varphi_{n+1} \right )\,,
\end{equation}
where $\nabla$ is the gradient on $(M,g_M)$ and so each entry of $\nabla \varphi$ is an $m$-dimensional vector. The Laplacian $\Delta$ acts on a function $u:M\to \R$ as follows: 
\begin{equation}\label{Laplacian-local-coordinates}
 \Delta u = -\,\frac{1}{\sqrt{|g_M|}} \, \frac{\partial}{\partial x_i}\left(\sqrt{|g_M|}\,(g_M)^{ij}\, \frac{\partial u}{\partial x_j} \right )
\end{equation}
(note that in \cite{FMR, Saliba} the opposite sign convention for $\Delta$ was adopted).
Next, let $p$ denote a positive integer. In this context, we recall (see \cite{Aubin, Hebey}) the definition of the following Sobolev spaces: 
\begin{equation}\label{Sobolev-space}
\begin{split}
&W^{p,2}\left ( M,\s^n \right )=\\
&\left \{ \varphi \in W^{p,2}\left ( M,\R^{n+1} \right )\,: \, \varphi(x)=\left (\varphi_1(x),\ldots,\varphi_{n+1}(x)\right ) \in \s^n \,{\rm a.e.} \right \} \,.
\end{split}
\end{equation}
The energy functional \eqref{energia} becomes
\begin{equation}\label{energia-bis}
E(\varphi)=\frac{1}{2}\int_{M}\,|\nabla \varphi|^2\,dV
\end{equation}
and its Euler-Lagrange equation \eqref{harmonicityequation} takes the form
\begin{equation}\label{harmonicityequation-bis}
  \Delta \varphi - \left | \nabla \varphi \right |^2 \varphi =0 \, .
\end{equation}
Then a map $\varphi \in W^{1,2}\left ( M,\s^n \right )$ is \textit{weakly} harmonic  if it is a critical point of \eqref{energia-bis} in $W^{1,2}\left ( M,\s^n \right )$, i.e., if it is a solution of \eqref{harmonicityequation-bis} in the sense of distributions. 

A typical class of weakly harmonic maps is that of {\it minimizers} for the energy functional. To explain this notion, let us recall that in general, if $\varphi_0 \in W^{p,2}\left ( M,\s^n \right )$, then we can define
\begin{equation}\label{Sobolev-space-zero}
\begin{split}
&W_{\varphi_ 0}^{p,2}\left ( M,\s^n \right )=\\
&\left \{ \varphi \in W^{p,2}\left ( M,\s^n \right )\,: \, \left .\nabla^k\left(\varphi-\varphi_0 \right)\right |_{\partial M} \equiv 0,\,0 \leq k \leq p-1 \right \} \,,
\end{split}
\end{equation}
where the boundary condition in \eqref{Sobolev-space-zero} is understood in the sense of traces.

Now, we can say that ${\varphi_ 0} \in W^{1,2}\left ( M,\s^n \right )$ is a minimizer for the energy functional if it satisfies
\[
E(\varphi_0) \leq E(\psi) \quad \forall \,\psi \in W_{\varphi_0}^{1,2}\left ( M,\s^n \right )\,.
\]

As for the bienergy functional \eqref{bienergia}, in our context its expression becomes (see \cite{Baird, FS, Saliba})
\begin{equation}\label{bienergia-bis}
    E_2(\varphi)=\frac{1}{2}\int_{M}\,\left (\left |\Delta \varphi \right |^2\,-\,\left |\nabla \varphi \right |^4 \right )\,dV
\end{equation}
and its Euler-Lagrange equation is given by
\begin{equation}\label{biharmonicityequation-bis}
\begin{split}
  &\Delta^2 \varphi +2\,{\rm div} \left ( \left | \nabla \varphi \right |^2\,\nabla \varphi \right ) \\
  &+ \left (\left |\Delta \varphi \right |^2-\Delta \left |\nabla \varphi \right |^2 -2\, \nabla \varphi \cdot  \nabla \Delta \varphi +2\,\left |\nabla \varphi \right |^4\right )\, \varphi =0 \,,
\end{split}
\end{equation}
where the divergence operator ${\rm div}$ is applied to each component and $\cdot$ denotes scalar product in the following sense:
\[
\nabla \varphi \cdot  \nabla \Delta \varphi= \sum_{j=1}^{n+1}\,\nabla \varphi_j \cdot  \nabla \Delta \varphi_j \, .
\]
Next, we say that a map $\varphi \in W^{2,2}\left ( M,\s^n \right )$ is \textit{weakly} biharmonic  if it is a critical point of \eqref{bienergia-bis} in $W^{2,2}\left ( M,\s^n \right )$, i.e., if it is a solution of \eqref{biharmonicityequation-bis} in the sense of distributions. 
Again, a typical class of weakly biharmonic maps is that of {\it minimizers} for the bienergy functional.
Indeed, we say that $\varphi_0 \in W^{2,2}\left ( M,\s^n \right )$ is a minimizer if it satisfies
\[
E_2(\varphi_0) \leq E_2(\psi) \quad \forall \,\psi \in W_{\varphi_0}^{2,2}\left ( M,\s^n \right )\,.
\]
An important step towards understanding whether a given weakly biharmonic map is a minimizer is the study of its \textit{stability}. More precisely, let $\varphi_0 \in W^{2,2}\left ( M,\s^n \right )$ be a weakly biharmonic map and denote by $\varphi_s$ ($s \geq0$) a variation of $\varphi_0$ through maps in $W_{\varphi_0}^{2,2}\left ( M,\s^n \right )$. We say that $\varphi$ is  \textit{stable} if
\begin{equation}\label{second-variation}
 \left . \frac{d^2}{ds^2}\,E_2\left( \varphi_s \right ) \right |_{s=0} \, \geq \, 0
\end{equation}
for all such variations $\varphi_s$. In particular, if $\varphi_0$ is \textit{not} stable, then it cannot be a minimizer. Moreover, we point out that the notions of minimizer and stability extend to a critical point $\varphi_0$ of the $r$-energy functionals $E_r(\varphi)$, $E_r^{ES}(\varphi)$ simply by replacing $W^{2,2}\left ( M,\s^n \right )$ and $W_{\varphi_0}^{2,2}\left ( M,\s^n \right )$ by $W^{r,2}\left ( M,\s^n \right )$ and $W_{\varphi_0}^{r,2}\left ( M,\s^n \right )$ respectively, $r \geq 3$.
\vspace{2mm}

Formally, the Euler-Lagrange equations associated to the functionals $E_r(\varphi)$, $E_r^{ES}(\varphi)$ are elliptic PDE systems of order $2r$ and so we can say that any map $\varphi \in W^{r,2}\left ( B^n,\s^n \right )$ is a weak critical point if it is a solution of the Euler-Lagrange equation in the sense of distributions. Unfortunately, it is rather difficult to obtain an explicit version of \eqref{bienergia-bis} and \eqref{biharmonicityequation-bis} for these functionals. Therefore, in order to overcome this difficulty, we adopt an \textit{intrinsic approach} which works because of the following key observation, which is an obvious consequence of the fact that $\varphi_a$ is smooth away from the measure zero set $\{O\}$. We state it in the form of a proposition in order to make it easier to refer to it:
\begin{proposition}\label{pro:weakly-criterium} Let
$\varphi_a:B^n \to \s^n$ be a map as in \eqref{symmetricmaps}. Then $\varphi_a$ is a weak critical point of $E^{ES}_r(\varphi)$ ($E_r(\varphi)$) if and only if  $\varphi_a \in W^{r,2}\left ( B^n,\s^n \right )$ and $\varphi_a$ is $ES-r$-harmonic ($r$-harmonic) on $B^n \setminus \{ O\}$.
\end{proposition}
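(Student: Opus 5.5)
We need to prove Proposition~\ref{pro:weakly-criterium}: $\varphi_a$ is a weak critical point iff it lies in $W^{r,2}$ and satisfies the Euler--Lagrange equation classically on $B^n\setminus\{O\}$. The plan is to pass between the distributional equation on $B^n$ and the pointwise equation on the punctured ball. The point to handle is that the origin, although of measure zero, might carry a distribution supported at $\{O\}$.

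The forward direction is immediate. A weak critical point is by definition an element of $W^{r,2}(B^n,\s^n)$ solving the Euler--Lagrange system $\mathcal{E}(\varphi)=0$ in $\mathcal{D}'(B^n)$. Test functions supported in $B^n\setminus\{O\}$ are admissible, and there $\varphi_a$ is smooth. Integrating by parts then gives $\mathcal{E}(\varphi_a)=0$ pointwise on the punctured ball, i.e.\ $\varphi_a$ is $ES$-$r$-harmonic ($r$-harmonic) there.

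For the converse, take $\varphi_a\in W^{r,2}$ solving the equation classically off $O$, and write the equation in divergence/weak form:
\[
\int_{B^n}\sum_{|\alpha|,|\beta|\le r} A_{\alpha\beta}(\varphi_a,\nabla\varphi_a,\dots)\,\partial^\alpha\varphi_a\cdot \partial^\beta\psi\,dV=0 \quad\text{for all } \psi\in C_c^\infty .
\]
This is the first variation of $E_r$, respectively $E_r^{ES}$, in the direction of the tangential projection of $\psi$. Every term is homogeneous: since $\varphi_a$ is $0$-homogeneous, $|\nabla^k\varphi_a|\sim\rho^{-k}$. The first variation is an integral whose integrand is bounded by products of derivatives of total order $2r$, distributed between $\varphi_a$ and $\psi$. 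Because $\varphi_a\in W^{r,2}$, i.e.\ $n\ge 2r+1$ by Lemma~\ref{lemma-belong-Sobolev-space}, these integrals converge absolutely. To see this, note that terms with $j$ derivatives on $\psi$ contribute $\rho^{-(2r-j)}$, and $2r-j\le 2r<n$.

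The main step is a cutoff argument. Let $\eta_\varepsilon(x)=\eta(|x|/\varepsilon)$, with $\eta=0$ near $0$ and $\eta=1$ for $t\ge 1$. Split $\psi=\eta_\varepsilon\psi+(1-\eta_\varepsilon)\psi$. The first piece is supported away from $O$, so its contribution vanishes by the classical equation. The second piece has derivatives $|\nabla^j((1-\eta_\varepsilon)\psi)|\lesssim\varepsilon^{-j}$ on $B_{2\varepsilon}$. Its contribution is therefore bounded by
\[
\sum_{j\le r}\varepsilon^{-j}\int_{B_{2\varepsilon}}\rho^{-(2r-j)}\,dV\lesssim \varepsilon^{\,n-2r}\to0 ,
\]
using $n\ge 2r+1$. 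Hence the first variation vanishes for all $\psi$, so $\varphi_a$ is a weak critical point.

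The only real obstacle is making sure the weak form genuinely involves at most $r$ derivatives on each factor, or at least that all terms scale as above. For $E_r$ and $E_r^{ES}$ this holds because the energy density is quadratic in $r$-th order quantities, so its first variation pairs $r$-th order expressions of $\varphi_a$ with at most $r$ derivatives of the variation. Alternatively, one can bypass any explicit formula by differentiating the energy directly. For the variation $\varphi_s=\pi(\varphi_a+s\psi)$, where $\pi$ is the nearest-point projection, one computes $\frac{d}{ds}E(\varphi_s)\big|_{s=0}$. The integrand over $B_{2\varepsilon}$ is controlled by the same homogeneity bound, and its integral over $B^n\setminus B_{2\varepsilon}$ vanishes after integration by parts. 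This uses the classical equation, with the boundary terms on $\partial B_\varepsilon$ of order $\varepsilon^{n-1-2r+1}\to0$.
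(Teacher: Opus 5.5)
Your proposal is correct. It takes a more substantive route than the paper, which gives no argument beyond calling the statement an obvious consequence of $\varphi_a$ being smooth off the null set $\{O\}$.

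As you note, smoothness off a point is not enough by itself: the Euler--Lagrange expression could pick up a part supported at $O$. For instance, $|x|^{2-n}$ is harmonic on $B^n\setminus\{O\}$, yet $\Delta|x|^{2-n}$ is a multiple of $\delta_O$; it is also not in $W^{1,2}$. The hypothesis $\varphi_a\in W^{r,2}(B^n,\s^n)$, i.e.\ $n\ge 2r+1$, is exactly what rules this out.

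Your cutoff argument makes the mechanism explicit:
\begin{itemize}
\item homogeneity gives $|\nabla^k\varphi_a|\lesssim\rho^{-k}$, so each term of the first variation with $j$ derivatives on the test field is $O(\rho^{-(2r-j)})$ and hence integrable;
\item the piece $\eta_\varepsilon\psi$ contributes nothing, by the classical equation off $O$;
\item the piece $(1-\eta_\varepsilon)\psi$ contributes $O(\varepsilon^{n-2r})\to 0$.
\end{itemize}
This is the standard capacity-type removability argument. Its advantage is that it works directly with the variational definition of weak critical point. It only needs the structural fact that the densities of $E_r$ and $E_r^{ES}$ are quadratic in expressions carrying exactly $r$ derivatives of $\varphi$. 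So no explicit Euler--Lagrange system is required, which matters because the paper notes none is available for $E_r^{ES}$.

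If you adopt the paper's phrasing ``solution in the sense of distributions'', the same homogeneity gives an even shorter justification. Write the equation in divergence form, so each term is $\partial^\alpha F_\alpha$ with $F_\alpha$ homogeneous and in $L^1_{\mathrm{loc}}$. The distributional and pointwise versions of each term are then homogeneous of degree $-2r>-n$. Their difference is supported at $O$, while any nonzero distribution supported at $O$ is a combination of $\partial^\beta\delta_O$, which has degree $-n-|\beta|$. Hence the difference vanishes.

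One small point to add to your write-up: to differentiate $E(\varphi_s)$ under the integral sign, observe that the bounds $|\nabla^k\varphi_s|\lesssim\rho^{-k}$ hold uniformly for small $s$. This supplies the required domination.
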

By way of summary, in order to prove Theorem\link\ref{th-constant-soltz} it will suffice to understand when $\varphi_a \in W^{r,2}\left ( B^n,\s^n \right )$ and, taking into account Proposition~\ref{prop-equiv-r-harmonic-es-harmonic}, it is $r$-harmonic on $B^n \setminus \{ O\}$. This analysis will be carried out in the next section.
\section{Proofs of Proposition~\ref{prop-equiv-r-harmonic-es-harmonic} and Theorem~\ref{th-constant-soltz}}\label{proofs}
First, it is important to establish when a map $\varphi_a$ as in \eqref{symmetricmaps} belongs to $W^{r,2}\left ( B^n,\s^n \right )$:
\begin{lemma}\label{lemma-belong-Sobolev-space}
Let
$\varphi_a:B^n \to \s^n$ be a map as in \eqref{symmetricmaps}. Then $\varphi_a \in W^{r,2}\left ( B^n,\s^n \right )$ if and only if $n\geq 2r+1$.
\end{lemma}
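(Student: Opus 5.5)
The map $\varphi_a$ has components $\varphi_j(x)=\sin a\,x_j/\rho$ for $j\le n$ and the constant $\cos a$. Since $\sin a\neq 0$ for $0<a<\pi/2$, membership in $W^{r,2}(B^n,\s^n)$ is equivalent to $u_j(x)=x_j/\rho\in W^{r,2}(B^n)$ for each $j$. The constraint $|\varphi_a|=1$ holds everywhere away from $O$, so only the Sobolev regularity needs to be checked. The plan is to exploit the homogeneity: $u_j$ is smooth on $\R^n\setminus\{O\}$ and positively homogeneous of degree $0$.

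\textbf{Step 1: pointwise size of derivatives.} For every multi-index $\alpha$ with $|\alpha|=k$, $\partial^\alpha u_j$ is homogeneous of degree $-k$. Hence
\[
|\partial^\alpha u_j(x)|=\rho^{-k}\,|\partial^\alpha u_j(x/\rho)|\le C_k\,\rho^{-k}.
\]
For the converse we need a lower bound. For each $k\ge1$ some $\partial^\alpha u_j$ with $|\alpha|=k$ is not identically zero on $\s^{n-1}$; otherwise all $k$-th derivatives would vanish on $\R^n\setminus\{O\}$, so $u_j$ would be a polynomial of degree $<k$ there, which is impossible for a nonconstant degree-$0$ homogeneous function. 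By continuity, $|\partial^\alpha u_j|\ge c\,\rho^{-k}$ on a cone $\{x:x/\rho\in U\}$ with $U\subset\s^{n-1}$ open.

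\textbf{Step 2: integrability.} In polar coordinates, $\int_{B^n}\rho^{-2k}\,dx=|\s^{n-1}|\int_0^1\rho^{n-1-2k}\,d\rho$, which is finite iff $n>2k$. Applied to the cone, the same computation shows the classical derivative of order $k$ fails to be in $L^2$ when $n\le 2k$. Thus the pointwise (classical) derivatives up to order $r$ are in $L^2(B^n)$ iff $n\ge 2r+1$. The conditions for lower orders $k<r$ are then automatic, and the zeroth-order term is bounded.

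\textbf{Step 3: classical versus weak derivatives (main obstacle).} We must show that, for $n\ge 2r+1$, the weak derivatives of $u_j$ coincide with the classical ones computed on $B^n\setminus\{O\}$, so that no distribution is concentrated at $O$. Conversely, if $n\le 2r$ and a weak $r$-th derivative existed in $L^2$, it would agree with the classical one on $B^n\setminus\{O\}$, which we showed is not in $L^2$. The converse therefore only uses that the restriction of a weak derivative to $B^n\setminus\{O\}$ is the classical derivative.

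For the direct implication I would use a cutoff $\eta_\epsilon(x)=\eta(x/\epsilon)$, with $\eta=0$ near $O$ and $\eta=1$ outside $B_1$. Fix a test function $\psi$ and $|\alpha|=k\le r$. Write
\[
\int u_j\,\partial^\alpha\psi=\lim_{\epsilon\to0}\int u_j\,\partial^\alpha(\eta_\epsilon\psi),
\]
integrate by parts on the region where $\eta_\epsilon\psi$ is supported away from $O$, and expand by Leibniz. Every error term has the form
\[
\int \partial^\beta u_j\,\partial^\gamma\eta_\epsilon\,\partial^\delta\psi,\qquad |\gamma|\ge1,\ |\beta|+|\gamma|\le k.
\]
Such a term is bounded by $C\,\epsilon^{-|\beta|-|\gamma|}\,\epsilon^n\le C\,\epsilon^{n-k}$, which tends to $0$ since $n>2k\ge k$. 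So the weak derivatives equal the classical ones and lie in $L^2$ by Step 2. This gives $\varphi_a\in W^{r,2}(B^n,\s^n)$ iff $n\ge 2r+1$.
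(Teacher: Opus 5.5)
Your argument is correct and reaches the same radial power count as the paper, but by a different mechanism.

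The paper does not work with all partial derivatives. It asserts that $\varphi_a\in W^{r,2}(B^n,\s^n)$ is equivalent to the finiteness of $\int_{B^n}|\Delta^k\varphi_a|^2$ and $\int_{B^n}|\nabla(\Delta^{k}\varphi_a)|^2$ for the relevant $k$. It then uses the explicit formula for $\Delta\varphi_a$ from Lemma 3.1 of \cite{FMR} to obtain the exact integrals $c_2(k,n)\sin^2a\int_0^1\rho^{n-4k-1}\,d\rho$ and $c_1(k,n)\sin^2a\int_0^1\rho^{n-4k-3}\,d\rho$, with positive constants. That route is concrete and fits how the energies $E_r$ are written, through $\Delta$ and $\nabla$. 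However, it leaves two facts implicit:
\begin{itemize}
\item that control of iterated Laplacians and gradients is equivalent to full $W^{r,2}$ membership (interior elliptic regularity, usable because $\varphi_a$ is smooth near $\partial B^n$);
\item that the distributional derivatives have no part concentrated at $O$.
\end{itemize}

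Your homogeneity argument avoids both. The two-sided bounds $c\,\rho^{-k}\le|\partial^\alpha u_j|\le C\rho^{-k}$, with the lower one holding on a cone, treat every partial derivative directly and need no explicit formulas. Your cutoff estimate $O(\epsilon^{n-k})$ is exactly the justification that weak and classical derivatives agree when $n>k$.

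Your proof also applies verbatim to any smooth degree-zero homogeneous map. An example is the ellipsoidal maps of Section~\ref{sec-ellipsoid}, where the paper only says it is ``using the argument of Lemma~\ref{lemma-belong-Sobolev-space}''.

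One minor point: like the paper, you implicitly assume $n\ge 2$. Your non-vanishing argument needs $\R^n\setminus\{O\}$ to be connected, and for $n=1$ all classical derivatives of $x/|x|$ vanish.
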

\begin{proof} We observe that $\varphi_a \in W^{r,2}\left ( B^n,\s^n \right )$ if and only if:\\
when $r$ is even for all $1 \leq k \leq \frac{r}{2}$
\[
\int_{B^n}  \left | \nabla \left (\Delta^{k-1}\varphi_a\right )\right |^2 \, dV < +\infty \quad {\rm and} \quad \int_{B^n}  \left | \Delta^k \varphi_a\right |^2 \, dV < +\infty\,;
\]
when $r$ is odd for all $0 \leq k \leq \frac{r-1}{2}$ 
\[
\int_{B^n} \left | \Delta^k \varphi_a\right |^2 \, dV < +\infty \quad {\rm and}\quad \int_{B^n}  \left | \nabla \left (\Delta^{k}\varphi_a\right )\right |^2 \, dV < +\infty  \,,
\]
where, as usual, $\Delta^0 \varphi=\varphi$. We perform the computation using Lemma 3.1 of \cite{FMR} and, for all $k \geq 0$, we obtain:
\begin{equation}\label{formula-lemma-sobolev}
\begin{split}
\int_{B^n} \, \left | \nabla \left (\Delta^{k}\varphi_a\right )\right |^2 \, dV &= c_1(k,n)\,(\sin^2 a)\,\int_0^1 \,  \rho^{n-4k-3}\,d\rho  \,; \\ 
 \int_{B^n} \, \left | \Delta^k \varphi_a\right |^2 \, dV &= c_2(k,n)\,(\sin^2 a )\,\int_0^1 \,  \rho^{n-4k-1}\,d\rho \,,
\end{split}
\end{equation}
where $c_1(k,n)$ and $c_2(k,n)$ are positive constants which depend on $k$ and $n$. Now, the conclusion of the lemma follows easily from \eqref{formula-lemma-sobolev}.
\end{proof}
Next, we have to study $r$-harmonicity and $ES-r$-harmonicity of $\varphi_a$ on $B^n \backslash \{ O\}$. It is convenient to introduce a more general family of rotationally symmetric maps.
To this purpose, let us first introduce a family of warped product manifolds which will be suitable for our purposes. We set
\begin{equation}\label{rotationallysymmetricmanifolds}
\left( M,g_M \right )= \left (\s^{n-1} \times I, f^2(\rho)g_{\s}+d\rho^2 \right ),
\end{equation}
where $g_{\s}$ is the standard metric on $\s^{n-1}$, $I\subset \R$ is an open interval and $f(\rho)$ is a smooth function which is positive on $I$.
\begin{remark}\label{remark-interior-I}
In some instances, it may be of interest to extend the analysis through the closure $\overline{I}$ of $I$. By way of example, if $\overline{I}=[0,+\infty)$ and
\begin{equation}\label{condizioni-su-f}
\begin{cases}
    f(0)=0 \,, \quad f'(0)=1  ; \\[6pt]
    f^{(2k)}(0)=0 \quad {\rm for} \,\, {\rm all } \,\, k \geq 1 \,\, ,\\
  \end{cases}
\end{equation}
then the manifold \eqref{rotationallysymmetricmanifolds} becomes a \textit{model} in the sense of Greene and Wu (see \cite{GW, Petersen}). In particular, if $f(\rho)=\rho$ (respectively, $f(\rho)=\sinh \rho$) it is isometric to the Euclidean space $\R^{n}$ (respectively, the hyperbolic space $\H^{n}$). In a similar spirit, if $\overline{I}=[0,\pi]$ and $f(\rho)=\sin \rho$, then we have the Euclidean unit sphere $\s^{n}$.
We also consider the case that $M$ has a boundary. For instance, if $f(\rho)=\rho$ and $\overline{I}=[0,1]$, then we have the Euclidean unit ball $B^n$.

We point out that all the calculations and results of this section are valid on $I$. In particular, the study of regularity across the loci associated to $\partial I$ (poles or boundary of $\left( M,g_M \right )$) needs a case by case analysis.
\end{remark}
By way of summary, we refer to a manifold as in \eqref{rotationallysymmetricmanifolds} as to a \textit{rotationally symmetric manifold} and, to shorten notation, we often write $M_{f}$ to denote it. The family of rotationally symmetric maps which we are interested in is the following:
\begin{equation}\label{rotationallysymmetricmaps-models}\begin{array}{lrll}
                                           \varphi_{\alpha} \,:\, &\left (\s^{n-1} \times I, f^2(\rho)g_{\s}+d\rho^2 \right ) &\to &  \left (\s^{n-1} \times I', h^2(\alpha)g_{\s}+d\alpha^2 \right ) \\
                                           &&& \\
                                           &(w,\rho) &\mapsto & (w,\alpha(\rho)) \,\, ,
                                         \end{array}
\end{equation}
where $\alpha(\rho)$ is a smooth function on a real interval $I=(a,b)$ with values in $I'=(a',b')$. To denote a rotationally symmetric map as in \eqref{rotationallysymmetricmaps-models} we write $\varphi_{\alpha}:M_f \to M_h$ or, if the context is clear, simply $\varphi_\alpha$. 

For later use we recall that we locally work using the frame field 
\begin{equation}\label{eq:coordinate-field}
\left\{ \dfrac{\partial}{\partial w_1}, \dots, \dfrac{\partial}{\partial w_{n-1}}, \dfrac{\partial}{\partial w_{n}}=\dfrac{\partial}{\partial \rho}\right\}\,,
\end{equation}
where   $\left (w_1,\dots,w_{n-1}, w_n=\rho \right )$  are local coordinates on $M_f$ with  $\left (w_1,\dots,w_{n-1} \right )$  local coordinates  on $\s^{n-1}$. 
Moreover, their associated Christoffel symbols $\Gamma^k_{ij}$ are described by the following table:
\begin{equation}\label{simb-christ}
\begin{array}{lll}
{\rm (i)}&{\rm If}\, 1 \leq i,j,k \leq n-1: & \Gamma^k_{ij}={}^{\s}\Gamma^k_{ij} \\
{\rm (ii)}&{\rm If}\, 1 \leq i, j \leq n-1: & \Gamma^n_{ij}=\, -\,f(\rho)\,f'(\rho)\,\,(g_{\s})_{ij} \\
{\rm (iii)}&{\rm If}\, 1 \leq i,j \leq n-1: & \Gamma^j_{i n}=\frac{f'(\rho)}{f(\rho)} \,\,\delta_i^j \\
{\rm (iv)}&{\rm If}\, 1 \leq j \leq n: & \Gamma^j_{nn}=0=\Gamma^n_{jn} \,\,,\\
\end{array}
 \end{equation}
where ${}^{\s}\Gamma^k_{ij}$ and $(g_{\s})_{ij}$ denote respectively the Christoffel symbols and the components of metric tensor of $\s^{n-1}$ with respect to the local coordinates $w_1, \ldots,w_{n-1}$.

Now, let $\varphi_a$ be a map as in \eqref{symmetricmaps}. We note that the restriction of $\varphi_a$ to $B^n \backslash \{ O\}$ corresponds to the special case of \eqref{rotationallysymmetricmaps-models} where we choose $f(\rho)=\rho$, $h(\alpha)= \sin \alpha$, $I=(0,1]$, $I'=(0,\pi)$ and $\alpha(\rho)\equiv a$. In \cite{BMOR1} we carried out a detailed study of maps as in \eqref{rotationallysymmetricmaps-models} and we report here the results which are relevant in our context. First, we recall that the tension field of $\varphi_{\alpha}$ is given by:
\begin{equation}\label{tension-field-fi-alfa}
\tau \left (\varphi_{\alpha}\right )= \tau_\alpha (\rho)\, \frac{\partial}{\partial \alpha},
\end{equation}
where
\begin{equation}\label{tension-field-fi-alfa-funzione}
\tau_\alpha =\ddot{\alpha} +(n-1)\, \frac{\dot{f}}{f}\,\dot{\alpha}-\frac{(n-1)}{f^2}\, h(\alpha)\,h'(\alpha)
\end{equation}
and $\dot{\;}$ denotes the derivative with respect to $\rho$. We have the following general result:
\begin{theorem}\label{theorem-r-energy}
\cite{BMOR1} Let us denote
\begin{equation}\label{assumptions-recursivity}
\begin{split}
\mathcal{T}_2&= \tau_{\alpha}  \\
\mathcal{T}_{2k}&= \ddot{\mathcal{T}}_{2(k-1)}+\,(n-1) \,\frac{\dot{f}}{f}\,\dot{\mathcal{T}}_{2(k-1)}\, -\,(n-1)\,\frac{h'^2( \alpha)}{f^2}\,\mathcal{T}_{2(k-1)}  \quad ( k \geq 2)\\ 
\mathcal{T}_{2k+1}&=\left [\dot{\mathcal{T}}_{2k}^2+(n-1) \,\frac{h'^2( \alpha)}{f^2}\, \mathcal{T}_{2k}^2\right ]^{1/2} \quad (k \geq 1)\,\,,
\end{split}
\end{equation}
where $\tau_{\alpha}(\rho)$ is the function introduced in \eqref{tension-field-fi-alfa-funzione} and $\dot{\;}$ indicates the derivative with respect to $\rho$. Then the $r$-energy of a rotationally symmetric map $\varphi_{\alpha}$ as in \eqref{rotationallysymmetricmaps-models} is
\begin{equation}\label{r-energy-rot-symm-maps-Lagrangian}
E_r(\varphi_\alpha)={\rm Vol}(\s^{n-1})\,\int_a^b\,L_r\left(\rho, \alpha(\rho), \dot{\alpha}(\rho),\ldots, \alpha^{(r)}(\rho)\right) \, d\rho \quad (r \geq 2) \,,
\end{equation}
where the explicit expression for the Lagrangians $L_r$ is:
\begin{align}\label{r-energy-explicit-rot-maps}
L_r &=  \frac{1}{2}\,\,\mathcal{T}_{r}^2\, \,f^{n-1}\,  \,\, \quad \quad (r \geq 2) \,.
\end{align}
Moreover, $\varphi_\alpha$ is an $r$-harmonic map if and only if the function $\alpha$ satisfies the Euler-Lagrange equation
\begin{equation}\label{Euler-lagrange-generale}
\sum_{i=1}^r \, (-1)^i \, \frac{d^i}{d\rho^i}\,\left( \frac{\partial L_r}{\partial \alpha^{(i)}}\right)+\frac{\partial L_r}{\partial \alpha}=0 \,\,.
\end{equation}
\end{theorem}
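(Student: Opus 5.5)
The plan is to reduce everything to computations with sections of $\varphi_\alpha^{-1}TM_h$ of the special form $T(\rho)\,\partial/\partial\alpha$, and then to pass from the $n$-dimensional variational problem to the one-dimensional one by a symmetry argument.

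\emph{Step 1: the Levi-Civita connection of $M_h$ along $\varphi_\alpha$.} Using the Christoffel symbols \eqref{simb-christ}, written for $M_h$ with $h$ in place of $f$, I would compute
\[
\nabla^{\varphi}_{\partial_\rho}\partial_\alpha=\dot\alpha\,\nabla^{M_h}_{\partial_\alpha}\partial_\alpha=0,
\qquad
\nabla^{\varphi}_{\partial_{w_i}}\partial_\alpha=\frac{h'(\alpha)}{h(\alpha)}\,\partial_{w_i}.
\]
Here $d\varphi_\alpha(\partial_{w_i})=\partial_{w_i}$ and $d\varphi_\alpha(\partial_\rho)=\dot\alpha\,\partial_\alpha$.

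\emph{Step 2: the rough Laplacian on such sections.} Take the orthonormal frame $e_n=\partial_\rho$ together with $e_i=f^{-1}\bar e_i$, where $\{\bar e_i\}$ is orthonormal on $\s^{n-1}$. For $V=T(\rho)\,\partial_\alpha$ a direct calculation should give
\[
\overline{\Delta}V=-\Big(\ddot T+(n-1)\frac{\dot f}{f}\dot T-(n-1)\frac{h'^2(\alpha)}{f^2}\,T\Big)\partial_\alpha .
\]
Two contributions must be tracked. The term $\nabla_{\nabla_{e_i}e_i}$ yields $(n-1)(\dot f/f)\,\dot T$. The second covariant derivative in the sphere directions produces $(h'/h)^2\,T\,\bar e_i$, whose $\partial_{w}$-components must cancel against $\nabla_{\nabla^M_{e_i}e_i}$; the remaining $\partial_\alpha$-component, via $\Gamma^\alpha_{ij}=-hh'(g_{\s})_{ij}$, contributes $-(n-1)h'^2T/f^2$. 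Combining this with \eqref{tension-field-fi-alfa}, induction gives
\[
\overline{\Delta}^{s-1}\tau(\varphi_\alpha)=(-1)^{s-1}\mathcal T_{2s}\,\partial_\alpha .
\]
Hence $|\overline{\Delta}^{s-1}\tau|^2=\mathcal T_{2s}^2$, since $|\partial_\alpha|=1$.

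\emph{Step 3: the odd case and the energy formula.} For $r=2s+1$ we have
\[
\sum_j\big|\nabla_{e_j}(\mathcal T_{2s}\partial_\alpha)\big|^2=\dot{\mathcal T}_{2s}^2+\sum_{i=1}^{n-1}\Big|\frac{\mathcal T_{2s}\,h'}{f\,h}\,\bar e_i\Big|^2_{h^2 g_{\s}}=\mathcal T_{2s+1}^2 .
\]
Integrating against $dV=f^{n-1}\,d\rho\,dV_{\s^{n-1}}$ then gives \eqref{r-energy-rot-symm-maps-Lagrangian} with $L_r=\tfrac12\mathcal T_r^2 f^{n-1}$.

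\emph{Step 4: the Euler--Lagrange equation.} Restricting $E_r$ to the variations $\alpha+t\beta$ shows at once that $r$-harmonicity implies \eqref{Euler-lagrange-generale}. The converse, that criticality under symmetric variations suffices, is where I expect the main difficulty. I would handle it through equivariance. The functional $E_r$ and its first variation are invariant under the isometric $O(n)$-action on both $M_f$ and $M_h$ acting on the $\s^{n-1}$ factors, and $\varphi_\alpha$ is equivariant. Therefore the $r$-tension field $\tau_r(\varphi_\alpha)$, given explicitly by Wang's formula, is an equivariant section. Since the isotropy group $O(n-1)$ of a point of $\s^{n-1}$ fixes no nonzero vector in $T\s^{n-1}$, it must have the form $F(\rho)\,\partial_\alpha$. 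Testing the first variation formula against $\beta(\rho)\,\partial_\alpha$ with compact support in $I$ then gives
\[
{\rm Vol}(\s^{n-1})\int F\,\beta\, f^{n-1}\,d\rho = -\frac{d}{dt}\Big|_{0}\int L_r(\alpha+t\beta)\,d\rho .
\]
So $F\equiv0$ exactly when \eqref{Euler-lagrange-generale} holds; this is Palais' principle of symmetric criticality in concrete form.
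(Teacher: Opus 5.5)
Your proposal is correct and follows essentially the route behind the paper. The paper imports this result from \cite{BMOR1} without reproving it, but the ingredients it actually uses, Lemma~\ref{lemma-rough-laplacian-partialt} and the induction \eqref{eq:deltaktau}, are exactly your Steps~2--3, and your equivariance argument is a sound standard justification of the reduction to \eqref{Euler-lagrange-generale}: this is Palais' symmetric criticality, with the isotropy group $O(n-1)$ killing the $T\s^{n-1}$-component of $\tau_r$. Only cosmetic slips remain: in Step~2 the sphere-direction term $\nabla_{e_i}\nabla_{e_i}V$ equals $\frac{T h'}{f^2 h}\nabla^{M_h}_{\bar e_i}\bar e_i$ (whose $\partial_\alpha$-part is $-\frac{T h'^2}{f^2}\partial_\alpha$), not a multiple of $(h'/h)^2\bar e_i$, and the right-hand side of your last identity is missing the factor ${\rm Vol}(\s^{n-1})$.
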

\vspace{3mm}

We also recall the following technical lemma

\begin{lemma}[\cite{BMOR1}]\label{lemma-rough-laplacian-partialt}
	Let $\varphi_{\alpha}$ be a map of type \eqref{rotationallysymmetricmaps-models} and $W = F(\rho) \frac{\partial}{\partial \alpha} \in C(\varphi_{\alpha}^{-1}T M_h)$, where $F$ is a smooth function that depends on $\rho \in I$. Then
	\begin{equation}\label{expr-rough-laplacian-partialt}
		\overline{\Delta} W = -\left(\ddot{F} + (n-1)\frac{\dot{f}}{f}\dot{F}-(n-1)\frac{h'^2(\alpha)}{f^2}F\right)\frac{\partial}{\partial \alpha}.
	\end{equation}
\end{lemma}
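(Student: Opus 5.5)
The statement to prove is Lemma~\ref{lemma-rough-laplacian-partialt}. I will compute $\overline{\Delta} W=-\sum_{i}\big(\nabla^{\varphi}_{e_i}\nabla^{\varphi}_{e_i}W-\nabla^{\varphi}_{\nabla^M_{e_i}e_i}W\big)$ directly in the local frame \eqref{eq:coordinate-field}. I write it in coordinates as $-g^{ij}\big(\nabla^\varphi_{\partial_i}\nabla^\varphi_{\partial_j}W-\Gamma^k_{ij}\nabla^\varphi_{\partial_k}W\big)$ with $g^{ij}=f^{-2}g_{\s}^{ij}$ for $i,j\le n-1$ and $g^{nn}=1$. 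The target $M_h$ carries coordinates $(w_1,\dots,w_{n-1},\alpha)$. Its Christoffel symbols are given by the table \eqref{simb-christ} with $f,\rho$ replaced by $h,\alpha$. Since $d\varphi_\alpha(\partial_{w_i})=\partial_{w_i}$ and $d\varphi_\alpha(\partial_\rho)=\dot\alpha\,\partial_\alpha$, the pull-back connection is computable explicitly.

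First I compute the first derivatives. In the radial direction, $\nabla^\varphi_{\partial_\rho}W=\dot F\,\partial_\alpha+F\dot\alpha\,\nabla^{M_h}_{\partial_\alpha}\partial_\alpha=\dot F\,\partial_\alpha$, because $\Gamma^{\cdot}_{\alpha\alpha}=0$ by (iv). In the spherical directions, $\nabla^\varphi_{\partial_{w_i}}W=F\,\nabla^{M_h}_{\partial_{w_i}}\partial_\alpha=F\,\frac{h'}{h}\,\partial_{w_i}$ by (iii).

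Next come the second derivatives. Radially, $\nabla^\varphi_{\partial_\rho}\nabla^\varphi_{\partial_\rho}W=\ddot F\,\partial_\alpha$, and $\Gamma^k_{nn}=0$, so the radial contribution is $\ddot F\,\partial_\alpha$. Spherically, I differentiate $F\frac{h'}{h}\partial_{w_j}$ along $\partial_{w_i}$; the coefficient is independent of $w_i$. Using (i) and (ii) for $M_h$, this gives
\[
F\tfrac{h'}{h}\big({}^{\s}\Gamma^k_{ij}\partial_{w_k}-hh'(g_{\s})_{ij}\partial_\alpha\big).
\]
From this I subtract $\Gamma^k_{ij}\nabla^\varphi_{\partial_k}W$. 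For the $k\le n-1$ part, which is ${}^{\s}\Gamma^k_{ij}F\frac{h'}{h}\partial_{w_k}$, the spherical terms cancel exactly. For the $k=n$ part I subtract $-ff'(g_{\s})_{ij}\dot F\partial_\alpha$. Contracting with $f^{-2}g_{\s}^{ij}$ yields $(n-1)\big(-\frac{h'^2}{f^2}F+\frac{\dot f}{f}\dot F\big)\partial_\alpha$.

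Adding the radial contribution and inserting the overall minus sign gives \eqref{expr-rough-laplacian-partialt}. No step is a real obstacle. The only delicate point is bookkeeping: keeping track of the distinct roles of $\Gamma$ on $M_f$ and on $M_h$, and checking that the tangential spherical terms cancel so that $\overline{\Delta}W$ is indeed proportional to $\partial_\alpha$. A useful consistency check is that the formula matches the recursion for $\mathcal{T}_{2k}$ in Theorem~\ref{theorem-r-energy}.
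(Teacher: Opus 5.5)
Your computation is correct and complete. The first and second covariant derivatives check out, as do the cancellation of the tangential ${}^{\s}\Gamma^k_{ij}$ terms, the contribution $+ff'(g_{\s})_{ij}\dot F\,\partial_\alpha$ coming from $\Gamma^n_{ij}$ of $M_f$, and the contraction with $f^{-2}g_{\s}^{ij}$ (there are no mixed terms because $g^{in}=0$). The formula also holds for arbitrary $\alpha(\rho)$, because every $\dot\alpha$-term is killed by $\nabla^{M_h}_{\partial_\alpha}\partial_\alpha=0$.

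The paper does not reprove this lemma; it imports it from \cite{BMOR1}. Your direct frame computation, applying the table \eqref{simb-christ} to both $M_f$ and $M_h$, is the standard verification. It is in the same spirit as the coordinate computations the paper carries out in the proof of Theorem~\ref{th-Euler-Lag-r=5}.
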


Now we are in the right position to prove Proposition~\ref{prop-equiv-r-harmonic-es-harmonic}. Indeed, 

\begin{proof}[{\bf Proof of Proposition~\ref{prop-equiv-r-harmonic-es-harmonic}}]
We first prove that a map of type \eqref{rotationallysymmetricmaps-models} with $\alpha(\rho)=a$ satisfies the criterion of Proposition~\ref{pro-general-criterium-ES-r}, that is
\[
d^2\overline{\Delta}^i\tau(\varphi_{a}) = 0, \quad i=0,\dots,r-4.
\]

Using Lemma~\ref{lemma-rough-laplacian-partialt} it is easy to check by an induction argument that, for $k\geq 0$,
\begin{equation}\label{eq:deltaktau}
\overline{\Delta}^k \tau(\varphi_{\alpha}) = (-1)^k\mathcal{T}_{2(k+1)} \dfrac{\partial}{\partial \alpha}\,.
\end{equation}

Now, we recall that, for $Z \in C(\varphi_{\alpha}^{-1}TM_h)$,

\begin{equation}\label{eq:d2Z}
d^2 Z (X, Y) = R^{M_h}(d\varphi_{\alpha}(X),d\varphi_{\alpha}(Y))Z, \quad \forall X, Y \in C(TM_f).
\end{equation}
Therefore, for a fixed  $k$, using the standard expression of the curvature tensor of a warped product (see \cite[Chapter~7, Proposition~42]{Oneill}), we can compute $d^2 \overline{\Delta}^k \tau(\varphi_{\alpha})$ on the local coordinate frame field \eqref{eq:coordinate-field}.
We obtain, for $1\leq i,j\leq n-1$, 
\begin{equation}\label{eq:d2deltaktau}
\begin{split}
	d^2 \overline{\Delta}^k \tau(\varphi_{\alpha}) \left(\dfrac{\partial}{\partial w_i}, \dfrac{\partial}{\partial w_j}\right) & = (-1)^k\mathcal{T}_{2(k+1)} R^{M_h}\left(\dfrac{\partial}{\partial w_i}, \dfrac{\partial}{\partial w_j}\right) \dfrac{\partial}{\partial \alpha} = 0;\\
	d^2 \overline{\Delta}^k \tau(\varphi_{\alpha}) \left(\dfrac{\partial}{\partial \rho}, \dfrac{\partial}{\partial \rho} \right) & = (-1)^k \dot{\alpha}^2 \mathcal{T}_{2(k+1)} R^{M_h}\left(\dfrac{\partial}{\partial \alpha}, \dfrac{\partial}{\partial \alpha} \right) \dfrac{\partial}{\partial \alpha} = 0;\\
	d^2 \overline{\Delta}^k \tau(\varphi_{\alpha}) \left(\dfrac{\partial}{\partial w_i}, \dfrac{\partial}{\partial \rho} \right) & = (-1)^k \dot{\alpha} \mathcal{T}_{2(k+1)} R^{M_h}\left(\dfrac{\partial}{\partial w_i}, \dfrac{\partial}{\partial \alpha} \right) \dfrac{\partial}{\partial \alpha} \\ &= (-1)^{k+1} \dot{\alpha} \mathcal{T}_{2(k+1)} \dfrac{h''(\alpha)}{h(\alpha)}\dfrac{\partial}{\partial w_i}.\\
\end{split}
\end{equation}
Setting $\alpha(\rho) = a$ we easily conclude that $d^2 \overline{\Delta}^k \tau(\varphi_{\alpha})=0$ for any fixed $k\geq 0$.

Specializing to a map $\varphi_a:B^n \to \s^n$, $n\geq 2r+1$, as in \eqref{symmetricmaps} we obtain that $\varphi_a$ restricted to $B^n\setminus\{O\}$ is $r$-harmonic if and only if it is $ES-r$-harmonic. Next,  combining Proposition~\ref{pro:weakly-criterium} and Lemma~\ref{lemma-belong-Sobolev-space} we conclude that $\varphi_a$ is weakly $r$-harmonic if and only if it is weakly $ES-r$-harmonic.
\end{proof}

\begin{proof}[\textbf{Proof of Theorem~\ref{th-constant-soltz}}]
We prove cases (i) and (iii), since case (ii) was already treated in \cite{BMOR1}.

\smallskip

\noindent
\textbf{Case (i).}
Using \eqref{r-energy-explicit-rot-maps} and \eqref{Euler-lagrange-generale} with $r=3$, $f(\rho)=\rho$, and $h(\alpha(\rho))=\sin(\alpha(\rho))$ we find that a map $\varphi_\alpha$ with $\alpha(\rho)\equiv a\in (0,\pi/2)$ is a proper weakly $3$-harmonic map if and only if $a$ is a nonzero root of the polynomial
\[
3(n-1)^2\cos(4a)
+4(n-1)(9n-41)\cos(2a)
+81n^2-754n+1633.
\]

Setting $x=\cos^2 a$, the problem reduces to finding real roots in the interval $(0,1)$ of the polynomial
\begin{equation}\label{eq-poly-r3}
3(n-1)^2 x^2 + 2(n-1)(3n-19)x + 2(n-4)(3n-23).
\end{equation}

The polynomial \eqref{eq-poly-r3} has real roots if and only if $3\le n\le 8$. Since, by Lemma~\ref{lemma-belong-Sobolev-space}, one must have $n\ge 7$, the only possible cases remain $n=7$ and $n=8$.

Now, if $n=8$ a direct computation shows that both roots are negative. When $n=7$ there is exactly one root in the interval $(0,1)$ which yields the corresponding value
\[
a=a_3=\frac{1}{2}\arccos\!\left(\frac{1}{9}\bigl(2\sqrt{10}-11\bigr)\right).
\]

\smallskip

\noindent
\textbf{Case (iii).}
As for weakly $5$-harmonic maps, a computation analogous to the previous case shows that a map $\varphi_\alpha$ with $\alpha(\rho)\equiv a \in (0,\pi /2)$ is a proper weakly $5$-harmonic map if and only if $x=\cos^2 a$ is a root in $(0,1)$ of the polynomial
\begin{equation}\label{eq-poly-r5}
\big((n-1)x+2n-8\big)P_3(x)
\qquad \text{with } n\ge 11,
\end{equation}
where
\[
\begin{aligned}
P_3(x)={}&5(n-1)^3 x^3
+2(n-1)^2(17n-138)x^2 \\
&+4(n-1)(27n^2-475n+1984)x
+24(n-6)(n-8)(7n-79).
\end{aligned}
\]

First, we observe that when $n\ge 11$ the polynomial $(n-1)x+2n-8$ has only a negative root. Hence, admissible solutions must arise from the roots of $P_3(x)$.

In the case that $n=11$ we have $P_3(0)<0$ and $P_3(1)>0$, thus there must be an admissible root. A direct computation shows that $P_3(x)$ has exactly one positive root, which yields the corresponding value
\[
a=a_5=\frac{1}{2}\arccos\!\left(\frac{1}{25}\bigl(6\sqrt{6}-29\bigr)\right).
\]

If $n>11$, then $P_3(0)>0$ and it is straightforward to verify that for $n>14$ the derivative of $P_3(x)$ is positive for all $x$, so that $P_3$ has no admissible roots in $(0,1)$. If $12 \leq n \leq 14$ a direct inspection shows that the unique real root of $P_3(x)$ is negative.

Therefore, the polynomial in \eqref{eq-poly-r5} admits a root in $(0,1)$ if and only if $n=11$.
\end{proof}

\begin{remark}\label{rem-conjecture}
We have observed that the weak $r$-harmonic critical points $\varphi_{a_r}$ of Theorems~\ref{theorem-m=5-6biharmonic}(i) and \ref{th-constant-soltz}  can be described by means of the following pattern:
\[
n=2 r +1\,, \quad {\rm with}\quad a_r=\frac{1}{2} \arccos \left(\frac{\sqrt{\left(r^2-1\right) (2 r-1)}-r^2-r+1}{r^2}\right)\,.
\]
We conjecture that these weakly $r$-harmonic maps do exist for all $r \geq 2$. A computation carried out using the Wolfram software \textit{Mathematica}${}^{ \text{\textregistered}} $, based on the explicit case by case calculation of the Euler-Lagrange equations according to Theorem~\ref{Euler-lagrange-generale}, enabled us to verify  that \textit{the conjecture is true for all }$2 \leq r \leq 8$. A complete proof for $r \geq 9$ seems technically very demanding because it is difficult to establish a recursive law to control $L_r$ and its associated Euler-Lagrange equations as $r$ increases.
\end{remark}

\section{Stability}\label{stability}

In Proposition~\ref{prop-equiv-r-harmonic-es-harmonic} we showed that any map $\varphi_a:B^n \to \s^n$  as in \eqref{symmetricmaps} is 
weakly $r$-harmonic if and only if it is weakly $ES-r$-harmonic. Nevertheless, if $r\geq 4$ the functionals $E_r(\varphi)$ and $E^{ES}_r(\varphi)$ do not coincide and so the study of the stability of the critical points in Theorem~\ref{th-constant-soltz} (ii), (iii) also requires the explicit computations of both $E^{ES}_4(\varphi)$ and $E^{ES}_5(\varphi)$. 

When $r=4$, we found in \cite{BMOR1}:
\begin{equation} \label{4-ES-energy-rot-symm-maps}
E^{ES}_4(\varphi_\alpha)= \frac{1}{2}\,{\rm Vol}(\s^{n-1})\,\int_a^b\,\left [(n-1)\,\dot{\alpha}^2 \,\tau^2_\alpha\, \,\frac{h''^2 (\alpha)}{f^2} \right] \, f^{n-1}\,d\rho +E_4(\varphi_\alpha)\,.
\end{equation}

No recursive method, such as in Theorem~\ref{theorem-r-energy} for instance, is available to compute $E_r^{ES}(\varphi)$ when $r \geq 5$. Thus we have to perform the explicit tedious computation of $E^{ES}_5(\varphi)$. The following result will be useful for our study of stability, but it may be considered of independent interest for future studies of rotationally symmetric $ES-5$-harmonic maps between models.

\begin{theorem}\label{th-Euler-Lag-r=5} Let $\varphi_\alpha:M_f \to M_h$ be 
a rotationally symmetric map as in \eqref{rotationallysymmetricmaps-models}.
Then
\[
E^{ES}_5(\varphi)=\frac{1}{2}\,{\rm Vol}(\s^{n-1})\,\int_a^b\, L_5^{ES} d\rho\,,
\]
with
{\footnotesize
\begin{equation}\label{eq:ES-5-energy}
\begin{split}
&L_5^{ES}=\\
&\frac{1}{2}  {\Large\Big\{}\frac{(n-1)}{f^2} { \left[ h(\alpha) \frac{d}{d \rho}\left(\dot{\alpha} \tau_{\alpha} \frac{h''(\alpha)}{h(\alpha)}\right) +  \dot{\alpha}^2 \tau_{\alpha} \frac{h'(\alpha)h''(\alpha)}{h(\alpha)}+ (n-3) \dot{\alpha} \tau_{\alpha} h''(\alpha)\frac{\dot{f}}{f} \right]^2}\\&
+{\frac{(n-1)}{f^2}} {\dot\alpha}^2\, \tau_{\alpha}  \left[{(n-1)}\tau_{\alpha} \frac{{h'}^2(\alpha ) {h''}^2(\alpha )}{f^2} - {2 \mathcal{T}_{4}\, {h''}^2(\alpha )}\right]{\Large\Big\}}f^{n-1}+L_5
\end{split}
\end{equation}
}
where, according to \eqref{assumptions-recursivity} and \eqref{r-energy-explicit-rot-maps}, 
\[
\begin{split}
\mathcal{T}_{4}=&\ddot\tau_{\alpha} + (n-1)\dot\tau_{\alpha}\frac{ \dot f }{f} -(n-1) \tau_{\alpha}\frac{ {h'}^2(\alpha)}{f^2}\,,\\
L_5=&\frac{1}{2} \left\{\frac{(n-1)}{f^2}{\mathcal{T}_{4}^2\, {h'}^2(\alpha)}+\dot{\mathcal{T}_{4}}^2\right\} f^{n-1}\,.
\end{split}
\]
\end{theorem}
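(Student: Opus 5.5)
Since $E^{ES}_5(\varphi)=\frac12\int_M|(d^*+d)^5\varphi|^2\,dV$, the plan is to compute the form $(d^*+d)^5\varphi$ explicitly for $\varphi=\varphi_\alpha$ and then use the rotational symmetry to reduce the integral over $M_f$ to one over the interval $(a,b)$.

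\textbf{Step 1: the first four powers.} Start from $(d^*+d)\varphi=d\varphi$ and $(d^*+d)^2\varphi=d^*d\varphi+d^2\varphi$. Here $d^2\varphi=0$, because $d\varphi$ is a $1$-form and the torsion-free connections give $d(d\varphi)=0$. So $(d^*+d)^2\varphi=-\tau(\varphi)$, a $0$-form. Then $(d^*+d)^3\varphi=-d\tau$, and
\[
(d^*+d)^4\varphi=-d^*d\tau-d^2\tau=-\overline{\Delta}\tau-d^2\tau .
\]
By \eqref{eq:deltaktau} we have $\overline{\Delta}\tau=-\mathcal T_4\,\partial_\alpha$. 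The $2$-form part $d^2\tau$ is given by \eqref{eq:d2deltaktau} with $k=0$. Its only nonzero components are
\[
d^2\tau(\partial_{w_i},\partial_\rho)=-\dot\alpha\,\tau_\alpha\,\frac{h''}{h}\,\partial_{w_i}.
\]
This term is the source of all the correction terms in $L_5^{ES}-L_5$.

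\textbf{Step 2: the fifth power.} We get
\[
(d^*+d)^5\varphi=d\mathcal T_4\partial_\alpha\;-\;d^*d^2\tau\;-\;d\,d^2\tau .
\]
The $3$-form $d\,d^2\tau=d^3\tau$ is the alternation of $R^{M_h}(d\varphi\cdot,d\varphi\cdot)\,d\tau$ (the Bianchi identity kills the derivative of $R$). Since only the $(w_i,\rho)$ slots survive, every component $(w_i,w_j,\rho)$ combines a curvature action with $d\tau(\partial_{w_j})=\tau_\alpha h'/h\,\partial_{w_j}$. The resulting vectors should cancel by antisymmetry, because $R(\partial_{w_i},\partial_\alpha)\partial_{w_j}$ is proportional to $g_{ij}\partial_\alpha$. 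I will check this directly.

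The $1$-form part is then $d\mathcal T_4\partial_\alpha - d^*\omega$, with $\omega=d^2\tau$. I will compute
\[
d^*\omega(X)=-\sum_j(\nabla_{e_j}\omega)(e_j,X)
\]
in the coordinate frame, using the Christoffel symbols \eqref{simb-christ} of $M_f$ together with those of $M_h$.
\begin{itemize}
\item For $X=\partial_\rho$: we get a term $\omega(\partial_{w_i},\partial_\rho)$ differentiated along $\partial_{w_i}$. This produces $\partial_\alpha$-components proportional to $(n-1)\dot\alpha\tau_\alpha h''h'/f^2$, which add to $\dot{\mathcal T}_4$ in the $\partial_\alpha$ direction.
\item For $X=\partial_{w_i}$: we get $\partial_\rho$-derivatives of $\dot\alpha\tau_\alpha h''/h$, the term $\dot\alpha\,h'/h$ coming from $\nabla_{\partial_\rho}\partial_{w_i}$ on $M_h$, and the trace term $(n-3)\dot f/f$ coming from the $f'/f$ Christoffel symbols. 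These lie in the $\partial_{w_i}$ direction, where $d\mathcal T_4\,\partial_\alpha$ contributes $\mathcal T_4h'/h\,\partial_{w_i}$.
\end{itemize}

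\textbf{Step 3: squaring.} Square the norms using $|\partial_{w_i}|^2_{M_h}=h^2g_{\s}$ and the metric of $M_f$ for the form indices, and note that $dV=f^{n-1}dV_{\s}\,d\rho$. The pure terms reproduce $L_5$. The squared correction in the $w$-directions gives the first bracket, with factor $(n-1)/f^2$. The cross terms in the $w$-directions give $-2\mathcal T_4\dot\alpha^2\tau_\alpha h''^2$, up to an integration by parts in $\rho$ (or the cross term combines exactly). The $\partial_\alpha$ cross term and its square give $(n-1)^2\dot\alpha^2\tau_\alpha^2h'^2h''^2/f^4$ after completing the square.

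The main obstacle is this bookkeeping. One has to verify that the $3$-form part vanishes and that the cross terms collapse to exactly the stated form, with the right normalization constants. In particular, the $d^*$ conventions fix the sign of $-2\mathcal T_4$, and the overall factor $\frac12\cdot\frac12$ must come out as written. For that last check I will test the final formula on the case $h=$ linear (so $h''=0$), where it must reduce to $L_5$.
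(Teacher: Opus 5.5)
Your outline is viable, but it departs from the paper exactly at the step you left vague, and you have misdescribed that step. The paper never squares the $1$-form pointwise. From $(d^*+d)^5\varphi=-(d\overline{\Delta}\tau+d^*d^2\tau+d^3\tau)$ it uses that $d^*$ is the adjoint of $d$ to replace $\int_M\langle d\overline{\Delta}\tau,d^*d^2\tau\rangle$ by $\int_M\langle d^2\overline{\Delta}\tau,d^2\tau\rangle$, giving \eqref{ES-5-energia-generale}. The cross term then becomes algebraic: it is read off from \eqref{eq:d2deltaktau} with $k=0,1$ and equals $-(n-1)\dot\alpha^2\tau_\alpha\mathcal{T}_4h''^2/f^2$. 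The computations of $|d^*d^2\tau|^2$ and of $d^3\tau=0$ are as in your Steps 1--2.

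In your pointwise version the cross terms neither combine exactly nor disappear by completing a square. Put $u=\dot\alpha\tau_\alpha h''/h$ and write $-d^*d^2\tau(\partial_{w_\ell})=B\,\partial_{w_\ell}$ with $B=\dot u+\dot\alpha\frac{h'}{h}u+(n-3)\frac{\dot f}{f}u$. Then
\[
|(d^*+d)^5\varphi|^2=\mathcal{T}_5^2+\frac{n-1}{f^2}h^2B^2+\frac{(n-1)^2}{f^4}h^2h'^2u^2+\frac{2(n-1)}{f^2}\,hh'\big(\mathcal{T}_4B+\dot{\mathcal{T}}_4u\big),
\]
and the last term is not the one appearing in $L^{ES}_5$. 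The $\partial_\alpha$-cross term $\dot{\mathcal{T}}_4u$ cannot be absorbed on its own. It is exactly what turns the $w$-cross term into a derivative:
\[
f^{n-3}hh'\big(\mathcal{T}_4B+\dot{\mathcal{T}}_4u\big)=\frac{d}{d\rho}\Big(f^{n-3}\dot\alpha\tau_\alpha\mathcal{T}_4h'h''\Big)-f^{n-3}\dot\alpha^2\tau_\alpha\mathcal{T}_4h''^2 .
\]
With this identity your route closes; it is the one-dimensional form of the adjointness the paper invokes. Your route has one advantage: it exposes the boundary term $(n-1)\big[f^{n-3}\dot\alpha\tau_\alpha\mathcal{T}_4h'h''\big]_a^b$, which the paper drops silently when it uses adjointness on a domain with boundary. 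So the formula holds modulo this term. The term is irrelevant for the Euler--Lagrange equation, and it vanishes for the variations of Section~4, since $\dot\alpha_s(1)=0$ and $f^{n-3}$ kills the contribution at the origin.

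Two smaller points.
\begin{enumerate}
\item If you write $d^3\tau(X,Y,Z)$ as the cyclic sum of $R^{M_h}(d\varphi X,d\varphi Y)\,d\tau(Z)$, the $(w_i,w_j)$ slots do \emph{not} drop out. $R^{M_h}(\partial_{w_i},\partial_{w_j})$ acts nontrivially on $d\tau(\partial_{w_k})=\tau_\alpha\frac{h'}{h}\partial_{w_k}$, so the $(w_i,w_j,w_k)$ components vanish only by the first Bianchi identity. The paper sidesteps this by applying \eqref{expr-d3tau-general} directly to $d^2\tau$, whose $(w_i,w_j)$ components are zero.
\item Your normalization check will not reproduce the factor as printed. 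Since $\frac12\int_M|\cdot|^2dV={\rm Vol}(\s^{n-1})\int_a^b\frac12|\cdot|^2f^{n-1}d\rho$, you get $E^{ES}_5={\rm Vol}(\s^{n-1})\int_a^bL^{ES}_5\,d\rho$. This is consistent with \eqref{r-energy-rot-symm-maps-Lagrangian} and \eqref{ES-5-energia-generale}. The extra leading $\frac12$ in the statement is a slip, harmless for critical points and for the sign of second variations.
\end{enumerate}
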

\begin{proof}
We start computing the $ES-5$-energy functional for a smooth map  $\varphi: (M,g) \to (N,h)$ between two Riemannian manifolds.
Taking into account that the codifferential operator $d^*$ vanishes when it is applied to $0$-form and $d^2 \varphi = 0$, we have 
\[
(d^*+d)^5 (\varphi) = -(d^*+d) (\overline{\Delta}\tau(\varphi) + d^2 \tau(\varphi)) = -(d\overline{\Delta}\tau(\varphi) + d^*d^2 \tau(\varphi)+d^3 \tau(\varphi)).
\]

Then we compute the norm squared and, since the scalar product between forms with different degree does not contribute, we obtain

\[
|(d^*+d)^5 (\varphi)|^2 = |d\overline{\Delta}\tau(\varphi)|^2 + |d^*d^2 \tau(\varphi)|^2 + |d^3 \tau(\varphi)|^2 + 2\langle d\overline{\Delta}\tau(\varphi), d^*d^2 \tau(\varphi)\rangle.
\]

Therefore, using the fact that $d^*$ is the adjoint of $d$, we conclude that
\begin{equation}\label{ES-5-energia-generale}
\begin{split}
E^{ES}_5(\varphi)= &E_5(\varphi) + \frac{1}{2}\int_{M}|d^*d^2 \tau(\varphi)|^2 dV + \frac{1}{2}\int_{M} |d^3 \tau(\varphi)|^2 dV\\
 &+ \int_{M}\langle d^2\overline{\Delta}\tau(\varphi), d^2 \tau(\varphi)\rangle dV.
\end{split}
\end{equation}
Now we compute the last three terms in the right-hand side of \eqref{ES-5-energia-generale} assuming that the map $\varphi$ is a rotationally symmetric map $\varphi_{\alpha}$ as in \eqref{rotationallysymmetricmaps-models}.
With respect to the local coordinate frame field \eqref{eq:coordinate-field}
 we have
\begin{equation}\label{expr-codif-d2tau-normsquared}
\begin{split}
|d^*d^2 \tau(\varphi_\alpha)|^2 = & \sum_{i,j=1}^{n-1}\frac{(g_\mathbb{S})^{ij}}{f^2}\langle d^*d^2 \tau(\varphi_\alpha) \left({\frac{\partial}{\partial w_i}}\right), d^*d^2 \tau(\varphi_\alpha)\left({\frac{\partial}{\partial w_j}}\right)\rangle\\
& + \langle d^*d^2 \tau(\varphi_\alpha)\left({\frac{\partial}{\partial \rho}}\right), d^*d^2 \tau(\varphi_\alpha)\left({\frac{\partial}{\partial \rho}}\right) \rangle. 
\end{split}
\end{equation}

By the definition of $d^*$, the $1$-form $d^*d^2 \tau(\varphi_\alpha)$ evaluated on $X \in C(TM_f)$ is (see for example \cite[pag. 8]{EL83}):
\begin{align*}
d^*d^2 \tau(\varphi_\alpha)(X)  = &- \sum_{k,\ell =1}^n (g_{M_f})^{k \ell} \left(\nabla_{\frac{\partial}{\partial w_\ell }} d^2 \tau(\varphi_\alpha)\right)\left( \frac{\partial}{\partial w_k}, X\right)\\
= & - \sum_{k,\ell =1}^n (g_{M_f})^{k\ell } \left\{\nabla^{\varphi_{\alpha}}_{\frac{\partial}{\partial w_\ell }} \left(d^2 \tau(\varphi_\alpha)\left( \frac{\partial}{\partial w_k}, X\right) \right)\right.  \\
& \left.  - d^2 \tau(\varphi_\alpha)\left( \nabla^{M_f}_{\frac{\partial}{\partial w_\ell }}\frac{\partial}{\partial w_k}, X\right) -  d^2 \tau(\varphi_\alpha)\left( \frac{\partial}{\partial w_k}, \nabla^{M_f}_{\frac{\partial}{\partial w_\ell }}X\right) \right\} \\
\end{align*}
which, using the expression of the Christoffel symbols for a model given in \eqref{simb-christ}, becomes
\begin{align}\label{expr-codif-d2tau}
d^*d^2& \tau(\varphi_\alpha)(X)  =\nonumber\\
 &- \sum_{i,j=1}^{n-1 }\frac{(g_\mathbb{S})^{ij}}{f^2} \left\{\nabla^{\varphi_{\alpha}}_{\frac{\partial}{\partial w_j}} \left(d^2 \tau(\varphi_\alpha)\left(\frac{\partial}{\partial w_i}, X\right)\right) - {}^\mathbb{S}\Gamma_{ji}^k d^2 \tau(\varphi_\alpha)\left( \frac{\partial}{\partial w_k}, X \right) \right. \nonumber \\
& \left.  + (g_\mathbb{S})_{ji} f\dot{f}  d^2 \tau(\varphi_\alpha)\left( \frac{\partial}{\partial \rho}, X \right)  -  d^2 \tau(\varphi_\alpha)\left(\frac{\partial}{\partial w_i}, \nabla^{M_f}_{\frac{\partial}{\partial w_j}}X\right) \right\}\\
& -\nabla^{\varphi_{\alpha}}_{\frac{\partial}{\partial \rho}} d^2 \tau(\varphi_\alpha)\left(\frac{\partial}{\partial \rho}, X \right) +  d^2 \tau(\varphi_\alpha)\left(\frac{\partial}{\partial \rho}, \nabla^{M_f}_{\frac{\partial}{\partial \rho}}X\right)\,. \nonumber
\end{align}	

Next, substituting $X = \frac{\partial}{\partial w_\ell}$, $\ell=1, \dots, n-1$ in \eqref{expr-codif-d2tau} and using \eqref{simb-christ}, \eqref{eq:d2deltaktau}, after a standard computation we obtain: 
\begin{align*}
&d^*d^2 \tau(\varphi_\alpha)\left(\frac{\partial}{\partial w_\ell}\right) 	
= \\
&- \left( \frac{d}{d \rho}\left(\dot{\alpha} \tau_{\alpha} \frac{h''(\alpha)}{h(\alpha)}\right) +  \dot{\alpha}^2 \tau_{\alpha} \frac{h'(\alpha)h''(\alpha)}{h^2(\alpha)} + (n-3) \dot{\alpha} \tau_{\alpha} \frac{h''(\alpha)}{h(\alpha)}\frac{\dot{f}}{f} \right)\frac{\partial}{\partial w_\ell}  \,.
\end{align*}	

Similarly, if we choose $X = \frac{\partial}{\partial \rho}$, then \eqref{expr-codif-d2tau} becomes
\begin{align*}
d^*d^2 \tau(\varphi_\alpha)\left(\frac{\partial}{\partial \rho}\right) 	
= & -(n-1) \dot{\alpha} \tau_{\alpha} \frac{h'(\alpha)h''(\alpha)}{f^2} \frac{\partial}{\partial \alpha}\,.
\end{align*}	

Thus, substituting these two expressions in \eqref{expr-codif-d2tau-normsquared}, we conclude that

{\small
\begin{align}\label{expr-codif-d2tau-normsquared-final}
|d^*d^2 &\tau(\varphi_\alpha)|^2 =\nonumber\\
 & \frac{n-1}{f^2} \left[ h(\alpha) \frac{d}{d \rho}\left(\dot{\alpha} \tau_{\alpha} \frac{h''(\alpha)}{h(\alpha)}\right) +  \dot{\alpha}^2 \tau_{\alpha} \frac{h'(\alpha)h''(\alpha)}{h(\alpha)} + (n-3) \dot{\alpha} \tau_{\alpha} h''(\alpha)\frac{\dot{f}}{f} \right]^2 \nonumber\\
& + (n-1)^2 \dot{\alpha}^2 \tau_{\alpha}^2 \left(\frac{h'(\alpha)h''(\alpha)}{f^2}\right)^2.
\end{align}
}
The next step is to compute the scalar product between the $2$-forms $d^2\overline{\Delta}\tau(\varphi)$ and $d^2 \tau(\varphi)$. 

From \eqref{eq:d2deltaktau} we know that the only nonzero components of the two $2$-forms, when applied to pairs of vector fields from the coordinate frame  \eqref{eq:coordinate-field}, are
\begin{equation}\label{eq:nonzeroinnerproductd2deltad2}
\begin{split}
d^2\overline{\Delta}\tau(\varphi)\left(\frac{\partial}{\partial w_i},\frac{\partial}{\partial \rho}\right)&= \dot{\alpha} \mathcal{T}_{4} \dfrac{h''(\alpha)}{h(\alpha)} \frac{\partial}{\partial w_i}\,,\quad 1\leq i\leq n-1\\
d^2\tau(\varphi)\left(\frac{\partial}{\partial w_k},\frac{\partial}{\partial \rho}\right)&= -\dot{\alpha} \tau_{\alpha} \dfrac{h''(\alpha)}{h(\alpha)} \frac{\partial}{\partial w_k}\,,\quad 1\leq k\leq n-1\,.
\end{split}
\end{equation}
Now, with respect to the frame \eqref{eq:coordinate-field}, the inner product of the two $2$-forms is
\[
\begin{split}
&\langle d^2\overline{\Delta}\tau(\varphi), d^2 \tau(\varphi) \rangle\\
&=\sum_{i,j=1}^{n}\sum_{k,\ell=1}^{n} (g_{M_f})^{ik}(g_{M_f})^{j\ell}
\langle d^2\overline{\Delta}\tau(\varphi)\left(\frac{\partial}{\partial w_i},\frac{\partial}{\partial w_j}\right), d^2\tau(\varphi)\left(\frac{\partial}{\partial w_k},\frac{\partial}{\partial w_\ell}\right)\rangle \,.
\end{split}
\]
Taking into account \eqref{eq:nonzeroinnerproductd2deltad2}, this reduces to

{\small
\begin{align}\label{expr-scalar-product-2-form-final}
&\langle d^2\overline{\Delta}\tau(\varphi), d^2 \tau(\varphi) \rangle\nonumber\\
=& 
\sum_{i=1}^{n-1}\sum_{k=1}^{n-1} (g_{M_f})^{ik}(g_{M_f})^{nn}
\langle d^2\overline{\Delta}\tau(\varphi)\left(\frac{\partial}{\partial w_i},\frac{\partial}{\partial \rho}\right), d^2\tau(\varphi)\left(\frac{\partial}{\partial w_k},\frac{\partial}{\partial \rho}\right)\rangle\nonumber\\
=& 
\sum_{i=1}^{n-1}\sum_{k=1}^{n-1} \frac{(g_{\s})^{ik}}{f^2} \left(\dot{\alpha} \mathcal{T}_4 \frac{h''(\alpha)}{h(\alpha)}\right) \left(-\dot{\alpha} \tau_{\alpha} \frac{h''(\alpha)}{h(\alpha)}\right) \langle \frac{\partial}{\partial w_i}, \frac{\partial}{\partial w_k}\rangle\nonumber\\
=& 
\sum_{i=1}^{n-1}\sum_{k=1}^{n-1} \frac{(g_{\s})^{ik}}{f^2} \left(\dot{\alpha} \mathcal{T}_4 \frac{h''(\alpha)}{h(\alpha)}\right) \left(-\dot{\alpha} \tau_{\alpha} \frac{h''(\alpha)}{h(\alpha)}\right) h^2(\alpha) (g_{\s})_{ki}\nonumber\\
=&
-(n-1) \dot{\alpha}^2 \tau_{\alpha} \mathcal{T}_4 \frac{h''^2(\alpha)}{f^2}\,.
\end{align}
}

It remains to study the behavior of the $3$-form $d^3 \tau(\varphi_{\alpha})$. For every $X, Y, Z \in C(TM_f)$,
{\small 
\begin{align}\label{expr-d3tau-general}
&d^3 \tau(\varphi_{\alpha})(X, Y, Z) \nonumber\\
 =& \nabla^{\varphi_{\alpha}}_X d^2 \tau(\varphi_{\alpha})(Y, Z) - \nabla^{\varphi_{\alpha}}_Y d^2 \tau(\varphi_{\alpha})(X, Z) + \nabla^{\varphi_{\alpha}}_Z d^2 \tau(\varphi_{\alpha})(X, Y)\nonumber \\
& - d^2 \tau(\varphi_{\alpha})\left([X, Y], Z\right) + d^2 \tau(\varphi_{\alpha})\left([X, Z], Y\right) - d^2 \tau(\varphi_{\alpha})\left([Y, Z], X\right).
\end{align}
}
Using \eqref{expr-d3tau-general}, it is a straightforward computation to check that $d^3\tau(\varphi_{\alpha})$ vanishes. Indeed, since 
$d^3\tau(\varphi_{\alpha})$ is a $3$-form, it is sufficient to apply \eqref{expr-d3tau-general} to triples of the type  
\[
\left(\frac{\partial}{\partial w_i}, \frac{\partial}{\partial w_j}, \frac{\partial}{\partial w_k} \right)\quad{\rm and}\quad\left(\frac{\partial}{\partial w_i}, \frac{\partial}{\partial w_j}, \frac{\partial}{\partial \rho} \right),\quad {\rm with} \quad 1 \leq i, j, k \leq n-1\,.
\] 

Finally,  
substituting \eqref{expr-codif-d2tau-normsquared-final} and \eqref{expr-scalar-product-2-form-final} in \eqref{ES-5-energia-generale}, we obtain the expression in the statement of the theorem.
\end{proof}

\vspace{3mm}
Now we are in the right position to prove Theorem~\ref{r-stability-theorem}.

\begin{proof}[\textbf{Proof of Theorem~\ref{r-stability-theorem}}] 
As we have recalled in Section~\ref{preliminaries}, a  weakly $r$-harmonic ($ES-r$-harmonic) map $\varphi_0 \in W^{r,2}\left ( B^n,\s^n \right )$ is  \textit{stable} if
for all variations $\varphi_s$ of $\varphi_0$ through maps in $W_{\varphi_0}^{r,2}\left ( B^n,\s^n \right )$ we have 
\begin{equation}\label{second-variation-r}
 \left . \frac{d^2}{ds^2}\,E_r\left( \varphi_s \right ) \right |_{s=0} \, \geq \, 0 \quad \left( \left . \frac{d^2}{ds^2}\,E_r^{ES}\left( \varphi_s \right ) \right |_{s=0} \, \geq \, 0\right)\,.
\end{equation}
We now prove that the three maps described in Theorems~\ref{th-constant-soltz} 
are unstable with respect to both the $r$-energy and the $ES$-$r$-energy. 
The three cases will be considered separately.

\noindent
\textbf{Case (i).} The map $\varphi_a:B^n \to \s^n$, defined as in \eqref{symmetricmaps}, is a proper weakly $ES-3$-harmonic ($3$-harmonic) map if and only if $n=7$ and 
\[
a=a_3=\dfrac{1}{2} \arccos\left(\dfrac{1}{9} \left(2\,\sqrt{10}-11\right)\right).
\]
Since the $3$-energy coincides with the $ES$-$3$-energy, 
it suffices to exhibit an admissible variation $\varphi_s$ of $\varphi_{a_3}$ such that
\[
\left . \frac{d^2}{ds^2}\,E_3\left( \varphi_s \right ) \right |_{s=0} \, < \, 0\,.
\]
Let $\alpha_s(\rho)=a_3+s (1-\rho)^3$. Then the corresponding map $\varphi_s\in W_{\varphi_0}^{3,2}\left ( B^n,\s^n \right )$ and, 
using the expression of $E_3$ given in \eqref{r-energy-rot-symm-maps-Lagrangian}, a computation carried out with the aid of  \textit{Mathematica}${}^{ \text{\textregistered}} $ gives
\[
\left . \frac{d^2}{ds^2}\,E_3\left( \varphi_s \right ) \right |_{s=0} \,=  \frac{2948 \sqrt{\frac{2}{5}}}{63}-\frac{12812}{315}\simeq -11.0781 < \, 0\,.
\]

\noindent
\textbf{Case (ii).}  The map $\varphi_a$ is a proper weakly $ES-4$-harmonic ($4$-harmonic) map if and only if $n=9$ and
\[
a=a_4=\dfrac{1}{2} \arccos\left(\dfrac{1}{16} \left(\sqrt{105}-19\right)\right) \,.
\]
In this case it suffices to prove the existence of an admissible variation $\varphi_s$ of $\varphi_{a_4}$ such that the second variations of both the $4$-energy and the $ES$-$4$-energy are strictly negative. Choosing $\alpha_s(\rho)=a_4+s (1-\rho)^4$, then the corresponding map satisfies $\varphi_s\in W_{\varphi_0}^{4,2}\left ( B^n,\s^n \right )$. Next, 
using the expression of $E_4$ given in \eqref{r-energy-rot-symm-maps-Lagrangian} and that of $E_4^{ES}$ recalled in \eqref{4-ES-energy-rot-symm-maps}, a computation again performed with the aid of \textit{Mathematica}${}^{ \text{\textregistered}} $ yields  
\[
\left . \frac{d^2}{ds^2}\,E_4\left( \varphi_s \right ) \right |_{s=0} \,=  \frac{9799 \sqrt{5}}{12\sqrt{21}}-\frac{43415}{84}\simeq -118.393 < \, 0\,;
\]
\[
\left . \frac{d^2}{ds^2}\,E_4^{ES}\left( \varphi_s \right ) \right |_{s=0} \,=  \frac{2546 \sqrt{5}}{3\sqrt{21}}-\frac{11090}{21}\simeq -113.988 < \, 0\,.
\]

\noindent
\textbf{Case (iii).} The map $\varphi_a$ is a proper weakly $ES-5$-harmonic ($5$-harmonic) map if and only if $n=11$ and
\[
a=a_{5}=\dfrac{1}{2} \arccos\left(\dfrac{1}{25} \left(6\sqrt{6}-29\right)\right) \,.
\]
First, we prove that there exists an admissible variation $\varphi_s $ of $\varphi_{a_5}$ such that the second variation of the $5$-energy is strictly negative. Indeed, let $\alpha_s(\rho)=a_5+s (1-\rho)^7$. Then the corresponding map $\varphi_s\in W_{\varphi_0}^{5,2}\left ( B^n,\s^n \right )$ and using the expression of $E_5$ given in \eqref{r-energy-rot-symm-maps-Lagrangian} we obtain
\[
\left . \frac{d^2}{ds^2}\,E_5\left( \varphi_s \right ) \right |_{s=0} \,=  \frac{48 \left(47124133 \sqrt{6}-116365497\right)}{446875}\simeq -100.476 < \, 0\,.
\]
As for the $ES-5$-energy, we consider the variation  $\alpha_s(\rho)=a_5+s (1-\rho)^{8}$. Then the corresponding map again satisfies $\varphi_s\in W_{\varphi_0}^{5,2}\left ( B^n,\s^n \right )$ and, using the expression of the $ES-5$-energy given in Theorem~\ref{th-Euler-Lag-r=5}, the usual computation yields
{\small 
\[
\begin{split}
\left . \frac{d^2}{ds^2} E_5^{ES}\left( \varphi_s \right ) \right |_{s=0} &=  \frac{128 \left(268481902 \sqrt{6}+60060 \sqrt{86362 \sqrt{6}-197208}-673907943\right)}{7596875}\\
&\simeq -152.878 < \, 0
\end{split}
\]
and so the proof is completed.
}
\end{proof}

\section{Proofs of Theorems~\ref{th-ellipsoid-biharm} and \ref{th-ellipsoid-triharm}}\label{sec-ellipsoid}

\begin{proof}[Proof of Theorem~\ref{th-ellipsoid-biharm}] First, using the argument of Lemma~\ref{lemma-belong-Sobolev-space}, we observe that $\varphi_a \in  W^{2,2}\left ( B^n,E^n(b) \right )$ if and only if $n \geq 5$.

Next, we observe that a map $\varphi_a:B^n \to E^n(b)$ as in \eqref{symmetricmaps-ellipsoid} belongs to the family of rotationally symmetric maps described by
\begin{equation}\label{symmetricmaps-ellipsoid-general}\begin{array}{lcll}
                                           \varphi_{\alpha} \,:\, &B^n &\to &  E^n(b) \subset \R^n \times \R \\
                                           &&& \\
                                           &x &\mapsto & ( \sin \alpha(\rho)\,\frac{x}{\rho}, b\,\cos \alpha(\rho)) \, .
                                         \end{array}
\end{equation}

Taking into account Proposition~\ref{pro:weakly-criterium}, we only have to check when $\varphi_{\alpha}$ restricted to $B^n\setminus\{O\}$ is biharmonic. A routine computation shows that the bienergy of these maps is given, up to an irrelevant constant factor, by
\begin{equation}\label{eq-bienergy-ellipsoid}
E_2\left( \varphi_\alpha \right )=\int_0^1 |\tau|^2 \rho^{n-1}\,d\rho \,,
\end{equation}
where
$\tau=\tau_\alpha \frac{\partial}{\partial \alpha}$ and 
\begin{equation}\label{eq-tau-Enb}
\tau_\alpha= \ddot{\alpha}+\frac{(n-1)}{\rho}\dot \alpha-\frac{(n-1)}{\rho^2} \frac{\sin \alpha \cos \alpha}{k^2(\alpha)}+\frac{k'(\alpha)}{k(\alpha)} \dot{\alpha}^2 \,,
\end{equation}
with 
$$k(\alpha)= \sqrt{\cos^2 \alpha + b^2 \sin^2 \alpha}\,.$$

Now, setting $L_2=|\tau|^2 \rho^{n-1}=\tau_\alpha^2\, k^2(\alpha)\, \rho^{n-1}$ and using \eqref{Euler-lagrange-generale} one can formally compute the biharmonicity equation. The full expression of the biharmonicity ODE is rather complicated but, in the special case that $\alpha(\rho)\equiv a$, the biharmonicity condition reduces to

\begin{equation}\label{eq-biharmonicity-condition-ellipsoid}
\begin{split}
&b^2 (n-4) \sin ^3(2 a)+2 b^2 \sin ^5 a \cos a \left(2 b^2 (n-4)-n+1\right)\\
&+6 (n-3) \sin a \cos ^5 a=0 \,.
\end{split}
\end{equation}

Since $0 < a < \pi/2$, setting $y=\tan ^2 a$ we easily find that \eqref{eq-biharmonicity-condition-ellipsoid} is equivalent to
\begin{equation}\label{eq-biharmonicity-condition-ellipsoid-y}
P_2(y)= y^2 \left(2 b^2 (n-4)-n+1\right)+4 (n-4) y+3 \frac{(n-3)}{b^2}=0 \,.
\end{equation}
Thus the conclusion of the theorem is equivalent to the statement that the second order polynomial $P_2(y)$ defined in \eqref{eq-biharmonicity-condition-ellipsoid-y} admits a \textit{positive} root if and only if \eqref{eq-cond-bihar-ellipsoid} holds. Now, since $n \geq 5$, this clearly happens if and only if $\left(2 b^2 (n-4)-n+1\right)<0$ which is equivalent to \eqref{eq-cond-bihar-ellipsoid} and so the proof is completed.
\end{proof} 
\begin{remark} The exact value of $a=a(b,n)$ which produces a weakly biharmonic map $\varphi_a:B^n \to E^n(b)$ can be computed explicitly solving the second order equation \eqref{eq-biharmonicity-condition-ellipsoid-y}. It is
\[
a=\arctan \left ( \sqrt{-\frac{\sqrt{-\left((n-1) \left(2 b^2 (n-4)-3 n+9\right)\right)}+2 b (n-4)}{2 b^3 (n-4)-b n+b}} \right ) \,.
\]
\end{remark}
\vspace{2mm}

\begin{proof}[Proof of Theorem~\ref{th-ellipsoid-triharm}]
First, using the argument of Lemma~\ref{lemma-belong-Sobolev-space}, we observe that $\varphi_a \in  W^{3,2}\left ( B^n,E^n(b) \right )$ if and only if $n \geq 7$.
Also in this case, we observe that a map $\varphi_a:B^n \to E^n(b)$ as in \eqref{symmetricmaps-ellipsoid} belongs to the family of rotationally symmetric maps described by \eqref{symmetricmaps-ellipsoid-general}. Moreover, by the argument of Proposition~\ref{pro:weakly-criterium}, it is sufficient to check when $\varphi_{\alpha}$ restricted to $B^n\setminus\{O\}$ is triharmonic. Now a routine computation shows that the trienergy of these maps is given, up to an irrelevant constant factor, by
\begin{equation}\label{eq-bienergy-ellipsoid}
E_3\left( \varphi_\alpha \right )=\int_0^1 L_3\,d\rho \,,
\end{equation}
where
\[
L_3= \rho^{n-1} \left((n-1) \frac{\tau^2_{\alpha}  \cos^2 \alpha}{\rho^2}+ {\dot\alpha}^2\left(\tau_{\alpha}k'(\alpha)+k(\alpha) \frac{\partial}{\partial\alpha}{{\tau}_{\alpha}}\right)^2\right)\,
\]
where $\tau_\alpha$ and $k(\alpha)$ where defined in \eqref{eq-tau-Enb}. Now, using \eqref{Euler-lagrange-generale}, one can formally compute the triharmonicity equation. Although the full expression of the triharmonicity ODE is rather complicated, in the special case that $\alpha(\rho)\equiv a$ the triharmonicity condition, in $x=\cos^2 a$, reduces to the polynomial equation
\begin{equation}\label{eq-triharmonicity-condition-ellipsoid}
P_{b}(x)=A_3 x^3+A_2 x^2+A_1 x+A_0=0 \,,
\end{equation}
where
{\small
\[
\begin{split}
A_3=&(b^2-1)  \left(2(n-4) b^2-3 (n-3)\right) \left(4 (n-6) b^2-5 (n-5)\right)\\
A_2=&-b^2 \left(24 (n-4) (n-6)b^4 + (-62 n^2+566 n-1224)b^2+41 n^2-332 n+651\right)\\
A_1=&2 b^2 \left(12 (n-4) (n-6) b^4+(-17 n^2+149 n-312)b^2+(n-1) (2 n-5)\right)\\
A_0=&-2 b^4 (n-4) \left(4(n-6) b^2-n+1\right)\,.
\end{split}
\]
}
Thus the conclusion of the theorem is equivalent to the statement that the third order polynomial $P_b(x)$ defined in \eqref{eq-triharmonicity-condition-ellipsoid} admits a root in the interval $(0,1)$  if and only if \eqref{eq-cond-trihar-ellipsoid} holds. Now, since $n \geq 7$, $P_b(1)=-15 \left(n^2-8 n+15\right)<0$. Moreover, if \eqref{eq-cond-trihar-ellipsoid} holds, then $P_b(0)>0$ and so there exists a root of $P_b(x)$
in the interval $(0,1)$.

Conversely, let us assume that \eqref{eq-cond-trihar-ellipsoid} does not hold, that is
\[
b^2\geq\frac{(n-1)}{4(n-6)}=b_*^2\,.
\]
We prove that in this case $P_{b}(x)$ is strictly negative for $x \in(0,1)$. To this purpose, first we show that $P_{b_*}(x)<0$ for all $x\in (0,1)$ and $n\geq 7$. Indeed, we have 
\begin{eqnarray*}
P_{b_*}(x)&=& \frac{x}{4(n-6)}\Big [-3 (n-5) (n-1)^2-(n-1) \left(27 n^2-268 n+601\right) x\\
&&-2 (3 n-23) \left(5 n^2-49 n+104\right) x^2 \Big ]\,.
\end{eqnarray*}
Now, direct inspection shows that $P_{b_*}(x)<0$ for $x\in (0,1)$ and $n\geq 8$. If $n=7$, $P_{b_*}(x)$ becomes
\[
\frac{x}{4}(24 x^2-288 x-216)
\]
which is negative for all $x\in (0,1)$. Now, in summary, to end the proof it is sufficient to show that for $b^2> b_*^2$ we have
\[
P_{b_*}(x)-P_{b}(x)\geq 0\,.
\]
A direct computation yields
\[
P_{b_*}(x)-P_{b}(x)=\frac{(1-x) (4 (n-6)b^2+(1-n))}{4 (n-6)} Q_b(x)\,,
\]
where
\begin{eqnarray*}
Q_b(x)&=&x \left(30 n^2 x+3 n^2-286 n x-18 n+616 x+15\right)\\
&&+4 (n-6) (7 n-25) (1-x)\, x\, b^2+8 (n-6) (n-4) (1-x)^2 b^4\,.
\end{eqnarray*}

Thus it remains to show that $Q_b(x)\geq 0$ for $x\in (0,1)$, $n\geq 7$ and $b^2> b_*^2$.

Indeed, since $n\geq 7$, $b^2> b_*^2$ implies $Q_b(x)>Q_{b_*}(x)$, where
\[
\begin{split}
Q_{b_*}(x)=&\frac{1}{2(n-6)}\Big[((n-4) (n-1)^2+2 (n-4) (n-1) (9 n-59) x\\
&+\left(47 n^3-790 n^2+4239 n-7096\right) x^2\Big]\,.
\end{split}
\]
Now, direct inspection shows that $Q_{b_*}(x)>0$ when $n\geq 8$ and $x\in (0,1)$. Finally, if $n=7$, then $Q_{b_*}(x)$ becomes
\[
\frac{-12 x^2+144 x+108}{4}
\]
which is clearly positive for all $x\in (0,1)$ and so the proof is completed.
\end{proof}

\section{Replacing $B^n$ by a warped product}\label{sec-domain-not-Bn}
In Section~\ref{sec-ellipsoid} we proved that certain proper weakly biharmonic and triharmonic maps from $B^n$ onto a parallel hypersphere do exist when the target sphere $\s^n$ is replaced by a suitable ellipsoid $E^n(b)$. More precisely, differently from the case of a spherical target, a convenient choice of $b$ ensures existence also when the dimension $n$ is arbitrarily large.

In this section we analyse what happens if the domain, i.e., the Euclidean ball $B^n$, is replaced by a warped product manifold as in \eqref{rotationallysymmetricmaps-models}. We assume $h(\alpha)=\sin \alpha$ and so the target is isometrically equivalent to $\s^n$. 

More specifically, as a special case within the family of rotationally symmetric maps \eqref{rotationallysymmetricmaps-models}, here we study maps of the following form:
{\small
\begin{equation}\label{rotationallysymmetricmaps-model-to-sphere}
\begin{array}{rll}
                                           \varphi_{a} \colon\left (\s^{n-1} \times I, f^2(\rho)g_{\s}+d\rho^2 \right ) &\to &  \left (\s^{n-1} \times [0,\pi], \sin^2\!\alpha\, g_{\s}+d\alpha^2 \right )=\s^n \\
                                           && \\
                                           (w,\rho)& \mapsto & (w,a) \,\, , 
                                         \end{array}
\end{equation}
}
where $a$ is a constant such that $0<a<\pi/2$. It is natural to focus on the case that $\overline{I}=[0,1]$ and $f$ verifies in the pole the smoothness assumptions \eqref{condizioni-su-f}. In this case the domain is a unit geodesic ball centered at the pole which we denote by $\mathcal{B}^n_f$.
Our first result in this context is the following:
\begin{theorem}\label{th-model-non-existence}
Assume $n \geq 5$. Let $\mathcal{B}^n_f$ be a unit geodesic ball and $\varphi_a:\mathcal{B}^n_f \to \s^n$ a map as in \eqref{rotationallysymmetricmaps-model-to-sphere}. 
If $\varphi_a$ is weakly biharmonic, then $n=5$ or $n=6$, $\mathcal{B}^n_f$ is the Euclidean ball $B^n$ and $\varphi_a$ is one of the maps given in Theorem~\ref{theorem-m=5-6biharmonic}.
\end{theorem}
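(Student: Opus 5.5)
The plan is to reduce weak biharmonicity to an ODE for the warping function $f$, and then show that $f(\rho)=\rho$ is the only solution compatible with the pole conditions \eqref{condizioni-su-f}. Since $\varphi_a$ is smooth away from the pole $O$, weak biharmonicity implies that $\varphi_a$ is biharmonic on $\mathcal{B}^n_f\setminus\{O\}$, as in Proposition~\ref{pro:weakly-criterium}. So I will work on $I=(0,1]$ with Theorem~\ref{theorem-r-energy} for $r=2$, $h(\alpha)=\sin\alpha$, and $L_2=\frac12\tau_\alpha^2 f^{n-1}$, where
\[
\tau_\alpha=\ddot\alpha+(n-1)\frac{\dot f}{f}\dot\alpha-(n-1)\frac{\sin\alpha\cos\alpha}{f^2}.
\]

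\textbf{Step 1: the Euler--Lagrange equation at $\alpha\equiv a$.} I compute the three terms of \eqref{Euler-lagrange-generale}:
\[
\frac{\partial L_2}{\partial \ddot\alpha}=\tau f^{n-1},\qquad \frac{\partial L_2}{\partial \dot\alpha}=(n-1)\tau \dot f f^{n-2},\qquad \frac{\partial L_2}{\partial \alpha}=-(n-1)\tau\cos(2\alpha) f^{n-3}.
\]
I then substitute $\alpha\equiv a$, for which $\tau=-c/f^2$ with $c=(n-1)\sin a\cos a\neq 0$.

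\textbf{Step 2: reduction to an ODE for $f$.} After substitution and division by $-c$, the equation becomes
\[
(f^{n-3})''-(n-1)(\dot f f^{n-4})'-(n-1)\cos(2a) f^{n-5}=0 .
\]
Using $(f^{n-3})''=(n-3)(f^{n-4}\dot f)'$, this reduces to $-2(f^{n-4}\dot f)'=(n-1)\cos(2a)f^{n-5}$. Dividing by $f^{n-5}$, I obtain
\[
2f\ddot f+2(n-4)\dot f^2+(n-1)\cos(2a)=0\quad\text{on } (0,1].
\]

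\textbf{Step 3: fixing $a$ at the pole.} Letting $\rho\to0^+$ and using $f(0)=0$, $\dot f(0)=1$ with $f$ smooth up to $0$ (so $f\ddot f\to0$), I get
\[
\cos(2a)=-\frac{2(n-4)}{n-1}.
\]
Since $\cos(2a)\in(-1,1)$, this forces $2(n-4)<n-1$, i.e.\ $n\le 6$, and with $n\ge5$ I get $n=5$ or $n=6$. Substituting back, the ODE becomes
\[
f\ddot f+(n-4)(\dot f^2-1)=0 .
\]

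\textbf{Step 4: forcing $f=\rho$ (the main point).} The ODE is singular at $\rho=0$, so standard uniqueness does not apply directly; instead I look for a first integral. Set $Q=(\dot f^2-1)f^{2(n-4)}$. Then
\[
\dot Q=2f^{2n-9}\dot f\,\bigl[f\ddot f+(n-4)(\dot f^2-1)\bigr]=0,
\]
so $Q$ is constant on $(0,1]$. As $\rho\to0^+$, $\dot f^2-1\to0$ and $f^{2(n-4)}\to0$ (because $n\ge5$), hence $Q\equiv0$. Since $f>0$ on $I$, this gives $\dot f^2\equiv1$. By continuity and $\dot f(0)=1$ I get $\dot f\equiv1$, so $f(\rho)=\rho$ and $\mathcal{B}^n_f=B^n$. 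Then Theorem~\ref{theorem-m=5-6biharmonic} identifies $\varphi_a$, and the value of $\cos(2a)$ found in Step 3 agrees with it: $a=\pi/3$ for $n=5$ and $a=\frac12\arccos(-4/5)$ for $n=6$.

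I expect the delicate part to be the justification of the limits at the pole: that $f\ddot f\to0$, and the evaluation of $Q$ at $0^+$. This is exactly where the smoothness assumptions \eqref{condizioni-su-f} on $f$ and the hypothesis $n\ge5$ enter.
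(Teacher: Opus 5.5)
Your proof is correct and follows the same route as the paper. You derive the ODE $2f\ddot f+2(n-4)\dot f^2+(n-1)\cos(2a)=0$ explicitly, where the paper only calls it a straightforward computation; you then evaluate at the pole to get $\cos(2a)=-2(n-4)/(n-1)$ and $n\in\{5,6\}$, and integrate using $f(0)=0$, $\dot f(0)=1$. The only minor difference is that your first integral $Q=(\dot f^2-1)f^{2(n-4)}$ handles both dimensions at once: for $n=6$ it is exactly the paper's $f^4\dot f^2-f^4$, while for $n=5$ the paper instead integrates $(f\dot f)'=1$ twice. You also spell out the continuity argument that upgrades $\dot f^2\equiv1$ to $\dot f\equiv1$, which the paper leaves implicit.
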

\begin{proof} 
As in Lemma~\ref{lemma-belong-Sobolev-space}, a map of the type \eqref{rotationallysymmetricmaps-model-to-sphere} belongs to $W^{2,2}\left (\mathcal{B}^n_f , \s^n \right ) $ if and only if $n \geq 5$. Moreover, a straightforward computation shows that for a rotationally symmetric map $\varphi_a$ as in \eqref{rotationallysymmetricmaps-model-to-sphere} the biharmonicity equation is equivalent to
\begin{equation}\label{eq-biharm-from-models}
(n-1) \cos (2 a)+2 f(\rho ) f''(\rho )+2 (n-4) f'(\rho )^2=0\,.
\end{equation}

First, inspection of \eqref{eq-biharm-from-models} in $\rho=0$ using \eqref{condizioni-su-f} tells us that
\[
\cos (2a)= \frac{2(n-4)}{(1-n)} \,.
\]
It follows that necessarily $n=5$ or $n=6$, and \eqref{eq-biharm-from-models} can be rewritten as follows:
\begin{equation}\label{eq-ODE-f}
f(\rho ) f''(\rho )+ (n-4) f'(\rho )^2=(n-4)\,.
\end{equation}
First, let us assume that $n=5$. Then \eqref{eq-ODE-f} becomes
\[
f(\rho ) f''(\rho )+  f'(\rho )^2=1
\]
which can be explicitly integrated. Indeed, a first integration gives
\[
f(\rho)f' (\rho)= \rho +c_1
\]
and then
\[
f(\rho)=\sqrt{\rho^2 + 2\,c_1\,\rho + c_2} \,.
\]
But now imposing $f(0)=0$ and $f'(0)=1$ we easily conclude that $c_2=c_1=0$ and so $f(\rho)=\rho$ as requested. Then we are in the context of Theorem~\ref{th-model-non-existence} and so the proof is completed in this case. 
\vspace{2mm}

Now, let us assume that $n = 6$. Then the biharmonicity ODE becomes
\begin{equation}\label{ODE-1}
f f'' + 2 f'^2 - 2 = 0 \,.
\end{equation}
Multiplying \eqref{ODE-1} by $2 f^3 f'$ we obtain
\[
2 f^4 f' f'' + 4 f^3 f'^3 - 4 f^3 f' = 0\,,
\]
that is
\[
(f^4 f'^2)'-(f^4)'=0\,.
\]
Integrating we get
\[
f^4 f'^2=f^4 +C\,.
\]
Finally, imposing $f(0)=0$ we deduce that $C=0$ and so it is easy to conclude that $f(\rho)=\rho$. Then we are in the context of Theorem~\ref{th-model-non-existence} and so the proof is completed in this case as well.
\end{proof}
\vspace{3mm}
Next, we focus on the triharmonic case. Our result is:
\begin{theorem}\label{th-model-non-existence-tri}
Assume $n \geq 7$. Let $\mathcal{B}^n_f$ be a unit geodesic ball and $\varphi_a:\mathcal{B}^n_f \to \s^n$ a map as in \eqref{rotationallysymmetricmaps-model-to-sphere}.
\begin{itemize}
\item[(i)] If $\varphi_a$ is weakly triharmonic, then $n=7$.
\item[(ii)] If $n=7$, $f(\rho)$ is a real analytic function in a neighbourhood of $\rho=0$ and $\varphi_a$ is weakly triharmonic, then $\mathcal{B}^n_f$ is the Euclidean ball $B^n$ and $\varphi_a$ is the map given in Theorem~\ref{th-constant-soltz}(i).
\end{itemize}
\end{theorem}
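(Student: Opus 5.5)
We follow the scheme of the proof of Theorem~\ref{th-model-non-existence}. As in Lemma~\ref{lemma-belong-Sobolev-space}, $\varphi_a\in W^{3,2}(\mathcal{B}^n_f,\s^n)$ if and only if $n\ge 7$. The map is smooth away from the pole, so by the analogue of Proposition~\ref{pro:weakly-criterium} weak triharmonicity reduces to triharmonicity on $\mathcal{B}^n_f\setminus\{O\}$. We write $L_3=\frac12\mathcal{T}_3^2 f^{n-1}$ using Theorem~\ref{theorem-r-energy} with $h=\sin$, and we compute the Euler--Lagrange equation \eqref{Euler-lagrange-generale} at $\alpha\equiv a$. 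For a constant map we have $\tau_a=-(n-1)\sin a\cos a/f^2$ and $\mathcal{T}_4$ is explicit in $f,f',f''$. The triharmonicity condition then becomes a nonlinear ODE in $f$ of order four, say
\[
\Phi\bigl(\cos 2a, f, f', f'', f''', f''''\bigr)=0 .
\]
Its coefficients are polynomial in $\cos 2a$ and $n$. After multiplying by a suitable power of $f$, it should be polynomial in $f$ and its derivatives.

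\emph{Part (i).} We use the pole conditions \eqref{condizioni-su-f}, namely $f(0)=0$, $f'(0)=1$ and $f''(0)=f''''(0)=0$. We expand $f(\rho)=\rho+c_3\rho^3+c_5\rho^5+\dots$; only odd powers appear because of the vanishing even derivatives. We then evaluate the multiplied equation at $\rho=0$, in the limit, exactly as we did with \eqref{eq-biharm-from-models}. We expect the leading term to be independent of $c_3,c_5$. Since $f\sim\rho$, it should reproduce the Euclidean condition, namely that $x=\cos^2 a$ is a root of \eqref{eq-poly-r3} (after the conversion $\cos 2a=2x-1$). The proof of Theorem~\ref{th-constant-soltz}(i) already shows that this polynomial has a root in $(0,1)$, with $n\ge7$, only when $n=7$. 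This gives (i).

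The main risk lies in this leading-order step. If the multiplied equation is singular at $\rho=0$, we must compare the coefficients of $\rho^{-k}$ for the lowest $k$. We have to check that the $c_3$ contributions enter only at higher order. We would verify this by symbolic expansion, as the authors do with \emph{Mathematica}.

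\emph{Part (ii).} Now $n=7$ and $a=a_3$. Analyticity lets us write $f(\rho)=\rho+\sum_{k\ge1}c_{2k+1}\rho^{2k+1}$. We substitute this into the ODE and expand in powers of $\rho$. At order $\rho^{2k}$ (relative to the leading order) we expect an equation of the form
\[
\lambda(k)\,c_{2k+1}=F_k(c_3,\dots,c_{2k-1}),
\]
where $F_k$ is polynomial and vanishes when all previous coefficients vanish. The coefficient $\lambda(k)$ is an indicial polynomial in $k$, obtained by linearizing the ODE around $f=\rho$.

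The key point, and the main obstacle, is to show that $\lambda(k)\neq0$ for every integer $k\ge1$. We first compute the linearization of $\Phi$ at $f=\rho$ for perturbations $\rho^{2k+1}$, using the explicit values $n=7$ and $\cos 2a_3=\frac19(2\sqrt{10}-11)$. This yields a polynomial in $k$ of degree at most four, and its roots must be shown not to be positive integers. The irrationality of $\sqrt{10}$ should help here, since integer roots would force rational relations.

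It is also possible that a resonance occurs, that is, $\lambda(k_0)=0$ for some $k_0$. In that case we would have to use the compatibility condition at that order together with the nonlinear terms. Granting non-resonance, induction gives $c_{2k+1}=0$ for all $k$, so $f(\rho)=\rho$ near $0$. Since the ODE is analytic and $f>0$ on $(0,1]$, uniqueness extends this to the whole of $[0,1]$. Hence $\mathcal{B}^n_f=B^7$ and $\varphi_a$ is the map of Theorem~\ref{th-constant-soltz}(i).
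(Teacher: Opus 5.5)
Your proposal is correct and follows the paper's proof. Evaluating \eqref{tri} at the pole kills every term carrying a factor of $f$ and leaves exactly twice the Euclidean polynomial \eqref{eq-poly-r3}, which forces $n=7$ and $a=a_3$. Part (ii) is the same odd power-series induction: the coefficient you call $\lambda(k)$ is, in the paper, $4(2k+1)\bigl(-6j^3-27j^2+(4\sqrt{10}+2)j+12(2+\sqrt{10})\bigr)$ with $j=k-1$. Your irrationality idea does exclude resonance, because the $\sqrt{10}$-part $4j+12$ of the cubic vanishes only at $j=-3$, where the rational part equals $-63$. Your final ODE-uniqueness step is a welcome addition: the equation is regular wherever $f>0$, since its top-order coefficient is a nonzero multiple of $f^3$, whereas the paper passes from $f=\rho$ near the pole to all of $[0,1]$ without comment.
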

\begin{proof} As an application of Theorem~\ref{theorem-r-energy} we find that the condition of triharmonicity for $\varphi_a:\mathcal{B}^n_f \to \s^n$ is equivalent to:
{\small
\begin{eqnarray}\label{tri}
&&\\\nonumber
&& 4 (n-1) (2 \cos (2 a)+1) f(\rho) f''(\rho)\\\nonumber
&&+4 f'(\rho)^2 \Big [(n-1) (2 (n-5) \cos (2 a)+n-6)+\left(-2 n^2+33 n-103\right) f(\rho) f''(\rho)\Big]\\\nonumber
&&+(n-1)^2 \cos ^2(a) (3 \cos (2 a)-1)-4 f(\rho)^3 f^{(4)}(\rho)+4 (11-2 n) f(\rho)^2 f''(\rho)^2\\\nonumber
&&+16 \left(n^2-10 n+24\right) f'(\rho)^4-12 (n-5) f(\rho)^2 f^{(3)}(\rho) f'(\rho)=0\,.
\end{eqnarray}
}
Next, imposing the initial conditions
\[
f(0)=0\,, \quad f(0)=1\,,\quad f''(0)=0
\]
we obtain
\[
8 \left(n^2-6 n+5\right) \cos (2 a)+(n-1)^2 \cos ^2a \,(3 \cos (2 a)-1)+20 n^2-188 n+408=0\,.
\]
Now, a simple analysis as in the proof of Theorem~\ref{th-constant-soltz} shows that necessarily $n=7$ and 
$$a=a_3=\dfrac{1}{2} \arccos\left(\dfrac{1}{9} \left(2\,\sqrt{10}-11\right)\right)\,.
$$ 
Then the proof of part (i) of the theorem is completed. As for statement (ii), we first observe that inserting explicitly the values $n=7$ and $a=a_3$ into \eqref{tri} the triharmonicity condition becomes equivalent to
\begin{eqnarray}\label{eq:main}
&&-3 f^3 f^{(4)}
-9 (f f'')^2
+(8\sqrt{10}-26) f f''
+36 (f')^4
\\\nonumber
&&-18 f^2 f^{(3)} f'
+2 (f')^2 (45 f f'' + 8\sqrt{10}-35)
-16\sqrt{10}+34
=0\,.
\end{eqnarray}
The assumption that $f$ is real analytic in a neighbourhood of $\rho=0$ together with \eqref{condizioni-su-f} tell us that $f(\rho)$ can be written as a power series as follows:
\begin{equation}\label{eq-f-real-analytic}
f(\rho)=\rho+\sum_{j=1}^{+\infty}b_{2j+1}\,\rho^{2j+1}\,.
\end{equation}
We prove that $f(\rho)=\rho$ by induction. Indeed,

\textbf{Step 1, $b_3=0$.} First, using \eqref{eq-f-real-analytic}, we observe that we can write:
\begin{equation}\label{eq-espressioni-ff''-f'2}
\begin{array}{lll}
f^3 f^{(4)}=\sum_{j=1}^{+\infty} c_j\,\rho^{2j+2} &
f\,f''=6 \, b_3 \rho^2+\sum_{j=1}^{+\infty} d_j\,\rho^{2j+2}\\[8pt]
(f\,f'')^2=\sum_{j=1}^{+\infty} e_j\,\rho^{2j+2} &
f'^2=1+6 \,b_3 \rho^2 +\sum_{j=1}^{+\infty} f_j\,\rho^{2j+2}\\[8pt]
f'^4=1+12 \, b_3 \rho^2 +\sum_{j=1}^{+\infty} g_j\,\rho^{2j+2}&
f^2 f^{(3)}f'=6 \, b_3 \rho^2+\sum_{j=1}^{+\infty} h_j\,\rho^{2j+2}\\[8pt]
f'^2 f f''=6 \, b_3 \rho^2+\sum_{j=1}^{+\infty} k_j\,\rho^{2j+2}
\end{array}
\end{equation}

where the explicit expressions of the coefficients $c_j,\dots, k_j$ play no role.
Now, replacing \eqref{eq-espressioni-ff''-f'2} into \eqref{eq-ODE-f}, we obtain that the vanishing of the coefficient of $\rho^2$ in the power series reduces to
\[
144 \left(\sqrt{10}+2\right) b_3=0
\]
which implies $b_3=0$.

\textbf{Step 2, induction.} We assume $b_{2j+1}=0$ for $1 \leq j \leq j^*$. Then we have to prove that $b_{2(j^*+1)+1}=0$, i.e., $b_{2j^*+3}=0$.

Now, under the inductive hypothesis, the version of \eqref{eq-espressioni-ff''-f'2} becomes
\begin{equation}\label{eq-espressioni-ff''-f'2-induction}
\begin{split}
f^3 f^{(4)}&=(2j^*+3)(2j^*+2)(2 j^*+1)(2 j^*)b_{2j^*+3}\, \rho^{2j^*+2}+\sum_{j=j^*}^{+\infty} c_j\,\rho^{2j+4}\, ;\\
f\,f''&=(2j^*+3)(2j^*+2)b_{2j^*+3}\, \rho^{2j^*+2}+\sum_{j=j^*}^{+\infty} d_j\,\rho^{2j+4}\, ;\\
(f\,f'')^2&=\sum_{j=2j^*}^{+\infty} e_j\,\rho^{2j+4}\, ;\\
f'^2&=1+2(2j^*+3)b_{2j^*+3}\, \rho^{2j^*+2} +\sum_{j=j^*}^{+\infty} f_j\,\rho^{2j+4}\,;\\
f'^4&=1+4(2j^*+3)b_{2j^*+3}\, \rho^{2j^*+2}+\sum_{j=j^*}^{+\infty} g_j\,\rho^{2j+4}\,;\\
f^2 f^{(3)}f'&=(2j^*+3)(2j^*+2)(2 j^*+1)b_{2j^*+3}\, \rho^{2j^*+2}+\sum_{j=j^*}^{+\infty} h_j\,\rho^{2j+4}\, ;\\
f'^2 f f''&=(2j^*+3)(2j^*+2)b_{2j^*+3}\, \rho^{2j^*+2}+\sum_{j=j^*}^{+\infty} k_j\,\rho^{2j+4}\\
\end{split}
\end{equation}
where again the explicit expressions of the coefficients $c_j,\dots, k_j$ play no role.
Next, using \eqref{eq-espressioni-ff''-f'2-induction} into \eqref{eq-ODE-f}, we find that the lower order nonzero term in the power series is $\rho^{2j^*+2}$ and its coefficient is
\[
4 \, b_{2j^*+3} (2 j^*+3) \left(-6 j^*{^3}-27 j^*{^2}+\left(4 \sqrt{10}+2\right) j^*+12(2+\sqrt{10})\right)\,.
\]
Now, since the third order polynomial 
\[
-6 j^*{^3}-27 j^*{^2}+\left(4 \sqrt{10}+2\right) j^*+12(2+\sqrt{10})
\]
admits no integer root, we deduce that the vanishing of the coefficient of $\rho^{2j^*+2}$ implies $b_{2j^*+3}=0$ and so the proof is completed.
\end{proof}

\newpage

{\bf Statements and Declarations}\\

{\em Funding}: The authors are members of the Italian National Group G.N.S.A.G.A. of INdAM. The work was partially supported by the Project {PRoBIKI} funded by Fondazione di Sardegna.  The author A.S. was supported by a NRRP scholarship - funded by the European Union - NextGenerationEU - Mission 4, Component 1, 
Investment 3.4.\\

{\em Competing Interests}: The authors have no relevant financial or non-financial interests to disclose.

\end{document}